\definecolor{webgreen}{rgb}{0,.5,0}
\definecolor{webbrown}{rgb}{.6,0,0}
\theoremstyle{plain}
\newtheorem{theorem}{Theorem}
\newtheorem{corollary}[theorem]{Corollary}
\newtheorem{lemma}[theorem]{Lemma}
\newtheorem{proposition}[theorem]{Proposition}
\newtheorem{conjecture}[theorem]{Conjecture}
\theoremstyle{definition}
\theoremstyle{remark}
\begin{document}

\title{Fibonacci Partial Sums Tricks}
\author{Nikhil Byrapuram}
\author{Adam Ge}
\author{Selena Ge}
\author{Sylvia Zia Lee}
\author{Rajarshi Mandal}
\author{Gordon Redwine}
\author{Soham Samanta}
\author{Daniel Wu}
\author{Danyang Xu}
\author{Ray Zhao}
\affil{PRIMES STEP}
\author{Tanya Khovanova}
\affil{MIT}

\maketitle

\begin{abstract}
The following magic trick is at the center of this paper. While the audience writes the first ten terms of a Fibonacci-like sequence (the sequence following the same recursion as the Fibonacci sequence), the magician calculates the sum of these ten terms very fast by multiplying the 7th term by 11. This trick is based on the divisibility properties of partial sums of Fibonacci-like sequences. We find the maximum Fibonacci number that divides the sum of the Fibonacci numbers 1 through $n$. We discuss the generalization of the trick for other second-order recurrences. We show that a similar trick exists for Pell-like sequences and does not exist for Jacobhstal-like sequences.
\end{abstract}

\section{Introduction}

\subsection{The trick}

Everyone loves Fibonacci numbers. Moreover, everyone loves magic. This paper is about a magic trick related to Fibonacci numbers \cite{FNT}.

Consider the sequence of Fibonacci numbers $F_n$ defined as: $F_0 = 0$, $F_1 = 1$, and for $n > 1$, we have $F_n = F_{n-1} + F_{n-2}$. We call an integer sequence $S_n$ \textit{Fibonacci-like} if it follows the same rule as the Fibonacci sequence: $S_n = S_{n-1} + S_{n-2}$.
	
\textbf{The trick.} Ask your friend for two numbers. Create a Fibonacci-like sequence that starts with these two numbers and is exactly ten terms long. When you reach the tenth number, tell your friend you can total all ten numbers in your head!

\textbf{Explanation for the trick.} Suppose the first two terms of the Fibonacci-like sequence are $a$ and $b$. Then, one can check that the seventh term is $5a+8b$, and the sum of the first 10 terms is $55a+88b$. Thus, the total is always the seventh number times 11. As a bonus, multiplying by 11 in one's head is easy.

After we learned the trick, we noticed some other patterns. For example, if you add six consecutive terms of a Fibonacci-like sequence, you always get the fifth term times 4. Even simpler, if you add three terms, you get the third term times 2. And, even more trivially, when you add one term, you get the first term times 1. We summarize this in Table~\ref{tab:smalltrick}.

\begin{table}[ht!]
\centering
\begin{tabular}{|c|c|c|c|c|}
\hline
Number of elements $n$ in the sum 	& the sum & index $m$ &$m$-th term & multiplier $z$ \\ \hline
$1$	& $a$		& $1$	& $a$		& $1$ \\ \hline
$3$	& $2a+2b$	& $3$	& $a+b$	& $2$ \\ \hline
$6$	& $8a+12b$	& $5$	& $2a+3b$	& $4$ \\ \hline
$10$	& $55a+88b$	& $7$	& $5a+8b$	& $11$ \\
\hline
\end{tabular}
\caption{Sum of the first $n$ terms in a Fibonacci-like sequence is equal the $m$-th term times $z$.}
\label{tab:smalltrick}
\end{table}

The first column looks like triangular numbers. The third column looks like odd numbers. Do we have a pattern? Read on to find out!

\subsection{The statement of results and the road map}

We start with Fibonacci numbers in Section~\ref{sec:fib}. We consider the partial sums $S_n^F$ of the first $n$ terms ($F_1$ through $F_n$) of the Fibonacci sequence. We are looking for the largest Fibonacci number that divides $S_n^F$. We prove in Corollary~\ref{cor:psdiv} that when we divide partial sums $S_n^F$ by a Fibonacci number $F_m$, we get a Lucas number under certain conditions on $m$ and $n$. Our maximal $m$ satisfies these conditions. Theorem~\ref{thm:Fibmax} describes the index of the largest Fibonacci number that divides the sum of the first $n$ Fibonacci terms. To prove the theorem, we study in how many ways a number can be decomposed as a product of a Fibonacci number and a Lucas number. It appears that such a product is unique up to a small number of exceptions. This is covered by Proposition~\ref{prop:FLIdentities}. 

In Section~\ref{sec:flike}, we generalize the discussion to any Fibonacci-like sequence. Theorem~\ref{thm:Fib-like} states that the trick can be extended from Fibonaccis to any Fibonacci-like sequence for $n=3$ or $n \equiv 2 \pmod{4}$.

We discuss the maximality of Lucas sequences in Section~\ref{sec:lucas}.

We transition to general Lucas sequences in Section~\ref{sec:gen}. We explain when the trick works for all the sequences with the same recurrence. We also discuss particular examples. For the recurrence $x_n = -x_{n-1} + x_{n-2}$, the trick works for $n = 2k+2$. For the recurrence $x_n = x_{n-1} - x_{n-2}$, the trick works for $n \equiv 1,\ 3,\ 5,\ 6 \pmod{6}$. For the recurrence $x_n = 3x_{n-1} - x_{n-2}$, the trick works for odd $n$.

In Section~\ref{sec:jacob}, we show that the trick for Jacobsthal-like sequences does not exist. However, the trick exists for Pell-like sequences, as we show in Section~\ref{sec:pell}. We finish with Tribonacci and $n$-nacci sequences in Section~\ref{sec:trib}.

\section{Fibonacci Numbers}
\label{sec:fib}

\subsection{Motivation}

For the trick to work for any Fibonacci-like sequence, it needs to also work for the Fibonacci sequence itself. Thus, we start with the Fibonacci sequence. Define $S^F_n$ to be the sum of the first $n$ Fibonacci numbers, namely $F_1$ through $F_n$. The formula for this sum is well-known:
\[S^F_n = \sum_{i=1}^n F_i = F_{n+2} - 1.\]

As we showed before, $S^F_{10} = 11F_7$. To perform that trick, it is useful that the index 7 is relatively large. This way, we need to multiply by a relatively small number, making it possible to do it in our heads. Thus, we need to find the largest Fibonacci number that divides the sum of the first $n$ Fibonacci numbers.

\subsection{Preliminaries}

Lucas numbers $L_n$ play a special role in this section. They start as $L_0 = 2$ and $L_1 = 1$. The sequence follows the same rule as the Fibonaccis: for $n > 1$, we have $L_n = L_{n-1} + L_{n-2}$.

Our main question is: given $n$, what is the largest $m$ such that the sum of the first $n$ Fibonaccis is divisible by $F_m$? Let $m$ be the largest index of a Fibonacci number that divides $S^F_n$, and let us denote the factor by $z$. We have $S^F_n = zF_m$.

We wrote a program to calculate the values $m$ and $z$ and realized that the multiplier $z$ is always a Lucas number: $z = L_i$. Values $m$, $z$, and $i$ should be viewed as functions of $n$. When we need to emphasize this, we will use $m_n$, $z_n$, and $i_n$. Table~\ref{tab:trickAnyFib} shows these values for small $n$.

\begin{table}[ht!]
\begin{center}
\begin{tabular}{|c|c|c|c|}
\hline
Number of elements $n$ in the sum 	& index $m$	& multiplier $z$ & index $i$ of $z$\\ \hline
1	& 2	& 1 	& 1 \\ \hline
2	& 3	& 1 	& 1 \\ \hline
3	& 3	& 2 	& 0 \\ \hline
4	& 2	& 7 	& 4 \\ \hline
5	& 4	& 4 	& 3 \\ \hline
6	& 5	& 4 	& 3 \\ \hline
7	& 4	& 11 	& 5 \\ \hline
8	& 4	& 18 	& 6 \\ \hline
9	& 6	& 11 	& 5 \\ \hline
10	& 7	& 11 	& 5 \\ \hline
11	& 6	& 29 	& 7 \\ \hline
12	& 6	& 47 	& 8 \\ \hline
13	& 8	& 29 	& 7 \\ \hline
14	& 9	& 29 	& 7 \\ \hline
15	& 8	& 76 	& 9 \\ \hline
16	& 8	& 123 	& 10 \\ \hline
17	& 10	& 76 	& 9 \\ \hline
18	& 11	& 76 	& 9 \\ \hline
\end{tabular}
\end{center}
\caption{Sum of the first $n$ Fibonacci numbers equals the $m$-th term times $z$, where $z = L_i$.} 
\label{tab:trickAnyFib}
\end{table}

We see a pattern that starts from $n > 3$, where we have $m_{n+4} = m_n +1$, and $i_{n+4} = m_n +2$, which we will prove in Theorem~\ref{thm:Fibmax}. We have a sequence $m_n$, which is a new sequence A372048 in the OEIS \cite{OEIS}:
\[2, 3, 2, 2, 4, 5, 4, 4, 6, 7, 6, 6, 8, 9, 8, 8, 10, 11, 10, 10, 12, 13, 12, 12, \ldots,\]
and sequence $i_n$, which is also a new sequence A372049:
\[1, 1, 0, 4, 3, 3, 5, 6, 5, 5, 7, 8, 7, 7, 9, 10, 9, 9, 11, 12, 11, 11, 13, 14, \ldots.\]

We would like to introduce some equalities that will be useful later. We start with a famous equality 
\begin{equation}
\label{eq:n|n}
F_{2n} = F_nL_n.
\end{equation}
It is also known \cite{TG} that the Fibonacci neighbors can be factored into a product of a Fibonacci number and a Lucas number:
\begin{align*}
F_{2n} + (-1)^n & = F_{n-1} L_{n+1}\\
F_{2n} - (-1)^n & = F_{n+1} L_{n-1}\\
F_{2n+1} + (-1)^n & = F_{n+1} L_{n}\\
F_{2n+1} - (-1)^n & = F_{n} L_{n+1}.
\end{align*}

For our trick, we are interested in partial sums, so we will use the values of $n$ such that we can get partial sums of the Fibonacci sequence on the left side in the equations above. We rewrite the equations above, plugging in $2k+1$, $2k+2$, $2k+1$, and $2k+2$ for $n$, correspondingly. We get,
\begin{align}
F_{4k+2} - 1 & = F_{2k} L_{2k+2} = S^F_{4k}
\label{eq:2k+2|2k+4}\\
F_{4k+4} - 1 & = F_{2k+3} L_{2k+1} = S^F_{4k+2}
\label{eq:2k+3|2k+1}\\
F_{4k+3} - 1 & = F_{2k+2} L_{2k+1} = S^F_{4k+1}
\label{eq:2k+2|2k+1}\\
F_{4k+5} - 1 & = F_{2k+2} L_{2k+3} = S^F_{4k+3}
\label{eq:2k+2|2k+3}.
\end{align}

We see that sums of the first $n$ Fibonacci numbers are divisible by a Fibonacci number with an index close to $\frac{n}{2}$. But is such an index the largest possible? Before discussing it, we need to analyze the products of Fibonacci and Lucas numbers.

\subsection{Products of Fibonacci and Lucas numbers}

We see that the sums of Fibonacci numbers are products of Fibonacci and Lucas numbers. But how many ways can a number be a product of a Fibonacci number and a Lucas number? We would like to know all values $a$, $b$, $c$, $d$ such that $F_a L_b = F_c L_d$, assuming that indices are nonnegative.

\begin{proposition}
\label{prop:FLIdentities}
If $F_a L_b = F_c L_d$, and the indices are nonnegative, then one of the following cases is true.
\begin{enumerate}
\item $a = c$ and $b = d$, corresponding to the equality $F_a L_b = F_a L_b$.
\item $a=c=0$, corresponding to the equality $F_0 L_b = F_0 L_d = 0$.
\item $a=1$, $c=2$, and $b=d$, corresponding to the equality $F_1 L_b = F_2 L_b = L_b$ as $F_1 = F_2 = 1$. We have a similar case when $a=2$, $c=1$, and $b=d$.
\item $a=b=k$, $c=2k$, and $d=1$, corresponding to a famous equation $F_k L_k = F_{2k} L_1 = F_{2k}$.
\item ($a$, $b$, $c$, $d$) is one of (1, 3, 3, 0), (2, 3, 3, 0), (3, 1, 1, 0), (3, 1, 2, 0), and (3, 2, 4, 0), corresponding to the few exceptions when the product is 2, 4, or 6. The exceptions are
$F_1 L_0 = F_2 L_0 = F_3 L_ 1 = 2$, $F_1 L_3 = F_2 L_3 = F_3 L_0 = 4$, and $F_3 L_2 = F_4 L_0 = 6$.
\end{enumerate}
\end{proposition}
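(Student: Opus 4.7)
The plan is to linearize the equation via the product identity
\[
F_aL_b \;=\; F_{a+b} + (-1)^b F_{a-b},
\]
derived from Binet's formula together with $\phi\psi = -1$ (using the convention $F_{-n} = (-1)^{n+1}F_n$). Applying this to both sides of the hypothesis yields
\[
F_{a+b} + (-1)^b F_{a-b} \;=\; F_{c+d} + (-1)^d F_{c-d},
\]
turning the question into classifying collisions among sums of two signed Fibonacci numbers.

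First I would dispose of the degenerate boundaries. If $a = 0$ the left side vanishes, forcing $F_c = 0$ and hence $c = 0$ (Case~2). If $a, c \in \{1, 2\}$ then $F_a = F_c = 1$ and the equation collapses to $L_b = L_d$, so $b = d$ by injectivity of $L_n$ on $\mathbb{Z}_{\ge 0}$ (Case~3). The boundary $b = 0$ turns $F_aL_b$ into $2F_a$, and checking all small products $2F_a \le 8$ isolates precisely the sporadic exceptions of Case~5. The boundary $b = 1$ (or symmetrically $d = 1$) reduces one side to a single Fibonacci number; matching $F_a = F_cL_d$ through the same identity forces either $c = a$, $d = 1$ (trivial) or $a$ even with $c = d = a/2$, producing exactly the family $F_{2k} = F_kL_k = F_{2k}L_1$ of Case~4.

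In the generic range $a, b, c, d \ge 2$, I would rely on the estimate
\[
F_{a+b-1} \;<\; F_aL_b \;<\; F_{a+b+1},
\]
which follows from $F_{|a-b|} \le F_{a+b-4} < F_{a+b-2}$. Since the intervals $(F_{s-1}, F_{s+1})$ and $(F_{s'-1}, F_{s'+1})$ are disjoint once $|s - s'| \ge 2$, the equality $F_aL_b = F_cL_d$ forces $|(a+b) - (c+d)| \le 1$. The off-by-one subcase would require $F_{|a-b|} + F_{|c-d|} = F_{a+b-2}$, which is impossible when both correction indices are bounded by $a + b - 4$; hence $a + b = c + d$. The linearized identity then reduces to $(-1)^b F_{a-b} = (-1)^d F_{c-d}$, and a parity check (using $a \pm b \equiv a+b \pmod 2$) shows the phantom $F_1 = F_2$ coincidence cannot occur, so $|a-b| = |c-d|$. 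Combining with $a + b = c + d$ gives $\{a, b\} = \{c, d\}$, and the ordered-swap possibility $(a, b) \leftrightarrow (b, a)$ is ruled out by the identity $F_aL_b - F_bL_a = 2(-1)^b F_{a-b}$, which is nonzero unless $a = b$. This delivers Case~1.

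The main obstacle is the off-by-one subcase, where size bounds alone are inconclusive since the gap $F_{s+1} - F_s = F_{s-1}$ matches the order of the correction terms. The argument is salvaged in the generic range by the sharp bound $F_{|a-b|} + F_{|c-d|} \le F_{s-3} + F_{s-4} = F_{s-2} < F_{s-1}$, together with a careful direct check at the $b = 0$ and $b = 1$ boundaries to confirm that Cases~4 and~5 exhaust all non-generic coincidences and that no stray $F_1 = F_2$ ambiguity slips through.
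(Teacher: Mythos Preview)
Your strategy coincides with the paper's: linearize via $F_aL_b=F_{a+b}+(-1)^bF_{a-b}$, use growth to force $|(a+b)-(c+d)|\le 1$, and split into the equal and off-by-one subcases. Your handling of the equal subcase, including the swap exclusion through $F_aL_b-F_bL_a=2(-1)^bF_{a-b}$, is correct and matches the paper's Case~4, Subcase~1.

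There is, however, a genuine gap at the boundary $b=0$ (symmetrically $d=0$). Writing $F_aL_0=2F_a$ does \emph{not} bound $a$, so ``checking all small products $2F_a\le 8$'' cannot exhaust this case; you must still solve $2F_c=F_aL_b$ for arbitrary $a,b$. In the paper this is the longest case (their Case~0d), broken into three subcases according to whether $c=a+b-1$, $c=a+b-2$, or $c<a+b-2$, and the five sporadic tuples of Case~5 emerge one by one from that analysis rather than from a bounded search. Relatedly, your $b=1$ reduction $F_a=F_cL_d$ does not force only the two outcomes you list: $F_3=F_1L_0=F_2L_0$ and $F_4=F_1L_2$ are further solutions that must be routed through the $d\in\{0,1\}$ boundaries. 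Finally, your off-by-one bound is misindexed: with $a,b,c,d\ge 2$ and $s=a+b=c+d+1$ one has $|a-b|\le s-4$ and $|c-d|\le s-5$, so the correct estimate is $F_{|a-b|}+F_{|c-d|}\le F_{s-4}+F_{s-5}=F_{s-3}<F_{s-2}$; your stated bound $\le F_{s-2}$ is exactly the target and therefore proves nothing as written.
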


\begin{proof}
To prove that these are all the cases, we use the following equality $F_{a}L_{b} = F_{a + b} + (-1)^bF_{a - b}$, which allows us to replace $F_a L_b = F_c L_d$ with
\begin{equation}
\label{eq:product2sum}
F_{a + b} + (-1)^bF_{a - b} = F_{c + d} + (-1)^dF_{c - d}.
\end{equation}

Without loss of generality, we assume that $a + b \geq c + d$. We consider four cases, denoted by 0a, 0b, 0d, and 4, where we assume $a=0$, $b=0$, $d=0$, and all four indices are positive, respectively.

\textbf{Case 0a.} If one of $a$ or $c$ is zero, the other is zero, too, and we get Case 2 above. From now on, we assume that $a$ and $c$ are positive.

\textbf{Case 0b.} Suppose $b =0$, then $2F_{a} = F_{c + d} + (-1)^dF_{c - d}$. And, recall, we assumed $a \ge c+d$. This could only happen when $c = a$ and $d = 0$. Thus, we get Case 1 above. From now on, we assume that $b$ is positive.

\textbf{Case 0d.} Assume that $d = 0$, then we get $F_{a + b} + (-1)^bF_{a - b} = 2F_{c}$. As $a$ and $b$ are positive, we have $|a-b| \leq a+b -2$. That means, $c < a+b$.

We consider three subcases: $c= a+b-1$, $c = a+b-2$ and $c < a+b-2$.

\textbf{Subcase 1.} Suppose $c= a+b-1$, then we have $F_{c+1} + (-1)^bF_{a - b} = 2F_{c}$, which we can simplify to $(-1)^bF_{a - b} = F_c-F_{c-1} = F_{c-2}$. By our assumption, $c$ has a different parity from $a \pm b$. Thus, $|c-2| \ne |a-b|$. It follows that $F_{c-2} = 1$, implying that $c - 2$ and $|a-b|$ are in the set $\{-1, 1, 2\}$.

Suppose $c-2=-1$ and $|a-b|=2$. Then $a+b=c+1=2$, but since $|a-b|=2$, one of $a$ and $b$ equals zero, corresponding to Case 1 or Case 2 of the proposition. 

Suppose $c-2=1$ and $|a-b|=2$. Then $a+b=c+1=4$, and since $|a-b|=2$, $a$ and $b$ are equal to 1 and 3 in some order. Testing both cases, we find that the only valid solution to ($a$, $b$, $c$, $d$) is (1, 3, 3, 0), which is one of the exceptions in Case 5 of the proposition. 

Suppose $c-2=2$ and $|a-b|=1$. Then $a+b=c+1=5$, and since $|a-b|=1$, $a$ and $b$ are equal to $2$ and $3$ in some order. Testing both cases, we find that the only valid solution is (3, 2, 4, 0), which is one of the exceptions in Case 5 of the proposition.

\textbf{Subcase 2.} We now suppose that $c= a+b-2$. We get $F_{c+2}\pm F_{|a-b|}=2F_c$, which we can simplify to $-F_{|a-b|}=F_c+(F_c-F_{c+2})=F_c-F_{c+1}=-F_{c-1}$. Except for when $c-1$ equals 1 or 2, $|a-b|$ must equal $c-1$. But $a+b=c+2$, and as the sum and absolute difference between $a$ and $b$ have different parities, $a$ and $b$ are not integers, which is a contradiction. Now we consider the case when $c-1$ equals 1 or 2 in some order. If $c-1=1$ and $|a-b|=2$, then $a+b=c+2=4$. Thus, $a$ and $b$ are equal to 1 and 3 in some order. Testing both cases, we find that the only valid solution is (3, 1, 2, 0), which is one of the exceptions in Case 5 of the proposition. If $c-1=2$ and $|a-b|=1$, then $a+b=c+2=5$. Thus, $a$ and $b$ are equal to 2 and 3 in some order. Testing both cases, we find that the only valid solution is (2, 3, 3, 0), which is one of the exceptions in Case 5 of the proposition.

\textbf{Subcase 3.} The value of $2F_{c} - (-1)^bF_{a - b}$ is not greater than $2F_{a+b -3} + F_{a +b - 2} = F_{a+b -3} + F_{a +b -1}$. This could potentially equal $F_{a + b}$ only if $a+b-3 = 1$. Thus, we have $c = 1$ and $a+b =4$, which corresponds to the equation $F_{4} + (-1)^bF_{a - b} = 2F_{1}$, or equivalently $(-1)^bF_{a - b} = -1$. The solution (3, 1, 1, 0) follows, which is one of the exceptions in Case 5.

\textbf{Case 4.} 
From now on, we assume that all indices are positive. Then
\[|F_{a - b}| \leq F_{a + b-2} \quad \textrm{ and } \quad |F_{c - d}| \leq F_{c + d-2} .\]
Therefore,
\[F_{a + b} + (-1)^bF_{a - b} \ge F_{a + b} - F_{a +b -2} = F_{a +b -1}\]
and
\[F_{c + d} + (-1)^dF_{c - d} \le F_{c + d} + F_{c + d-2} \le F_{c + d + 1}.\]
Moreover, the last sign is an equality only when $c+d-2 = 1$, meaning that either $c = 1$ and $d=2$, or $c = 2$, and $d=1$. We can manually check that these values correspond to Case 3 in the proposition.
From now on, we assume that $F_{c + d} + F_{c + d-2} < F_{c + d + 1}$ from which it follows that
\[F_{a +b -1} \le F_{a + b} + (-1)^bF_{a - b} = F_{c + d} + (-1)^dF_{c - d} < F_{c + d + 1},\]
implying that $a+b-1 < c+d+1$, or, equivalently $a+b < c+d+2$. Combining this with our initial assumption that $c+d \le a+b$, we conclude that $c + d \leq a + b \leq c + d + 1$. We divide the rest in two subcases.

\textbf{Subcase 1.} Suppose $a+b=c+d$. We can simplify Eq.~\ref{eq:product2sum} by canceling $F_{a+b}=F_{c+d}$:
\[(-1)^bF_{a-b}=(-1)^dF_{c-d}.\]
It follows that $F_{|a-b|}=F_{|c-d|}$. Moreover, as $a+b$ and $c+d$ have the same parity, so $|a-b|$ and $|c-d|$ have the same parity. Thus, $|a-b| = |c-d|$.

Since the absolute difference is the same for pairs $(a,b)$ and $(c,d)$ and their sums are equal, there are two cases: $a=c$ and $b=d$ or $a=d$ and $b=c$. The former corresponds to Case 1 of the proposition. Consider the latter case, then $(-1)^dF_{c-d} = (-1)^aF_{b-a}$. On the other hand, we are given that $(-1)^bF_{a-b}=(-1)^dF_{c-d}$. It follows that $(-1)^bF_{a-b}=(-1)^aF_{b-a}$. By expanding, we get
\[(-1)^bF_{a-b}=(-1)^aF_{b-a} = (-1)^a(-1)^{b-a+1}F_{a-b} = (-1)^{b+1}F_{a-b}.\]
Therefore, $a-b = 0$, but then $a=c = b=d$, so this corresponds to Case 1 of the proposition too.

\textbf{Subcase 2.} Suppose $a+b=c+d+1$. Plugging this into Eq.~\ref{eq:product2sum}, we get:
\[F_{a+b}+(-1)^bF_{a-b} = F_{a+b-1}+(-1)^dF_{c-d}.\]
This can be rearranged into:
\[F_{a+b}-F_{a+b-1} = F_{a+b-2} = (-1)^dF_{c-d}-(-1)^bF_{a-b}.\]
We also know that $|a-b| \le a+b-2$ and has the same parity as $a+b$.

Suppose $|a-b| = a + b -2$. Then, either $a$ or $b$ has to be 1. Plugging in $a=1$ into the equation above we get $F_{b-1} = (-1)^dF_{c-d}-(-1)^bF_{1-b} = (-1)^dF_{c-d} + F_{b-1}$. Similarly, plugging $b=1$ into the equation above, we get $F_{a-1} = (-1)^dF_{c-d} + F_{a-1}$. In both cases, we get $F_{c-d} = 0$, which implies that $c=d$ and $a = 2c$, corresponding to Case 4.

Now we assume that $|a-b| < a+b-2$, we also have that $|c-d| \le c+d-2 < a+b -2$. Also, $a-b$ and $c-d$ have different parities. It follows that $|a-b| = a+b-4$, and $|c-d| = a+b-3 = c+d -2$. It follows that one of $a,b$ is 2, one of $c,d$ is 1, and the other two values equal each other. We can check the four cases for values of $(a,b,c,d)$. The values $(2,b,1,b)$ correspond to Case 3 of the proposition. The values $(a,2,1,a)$ mean $F_a = L_a$, which never happens. The values $(2,b,b,1)$ mean $L_b = 2F_b$, which never happens. The values $(a,2,a,1)$ mean $F_a = 2F_a$, which does not happen for positive $a$.
\end{proof}

\subsection{Partial sums divisibility}

We noticed that when partial sums divide a Fibonacci number, we often get a Lucas number as a result. It is not actually surprising.

Consider partial sums $S_n^F$ of the first $n$ Fibonacci numbers that equal to $F_{n+2} - 1$, which is close to $\frac{\phi^{n+2}}{\sqrt{5}}$. Number $F_m$ is close to $\frac{\phi^m}{\sqrt{5}}$. Therefore, the ratio $\frac{S_n^F}{F_m}$ is close to $\phi^{n-m+2}$, which itself is close to $L_{n-m+2}$. The following lemma estimates when the ratio $\frac{S_n^F}{F_m}$ is not more than 1 from a Lucas number.

\begin{lemma}
\label{lemma:estimate}
If $n-3m\leq-4$ and $n\geq11$ and $n - m \geq 0$, then $\left|\frac{F_{n+2}-1}{F_m}-L_{n-m+2}\right| < 1$.
\end{lemma}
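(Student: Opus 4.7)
The plan is to convert the inequality to an exact identity and then bound the result. Using the Binet formulas $F_k = (\phi^k - \psi^k)/\sqrt{5}$ and $L_k = \phi^k + \psi^k$ with $\phi\psi=-1$, I would expand
\[F_m L_{n-m+2} = \tfrac{1}{\sqrt{5}}(\phi^m - \psi^m)(\phi^{n-m+2} + \psi^{n-m+2})\]
and collect the $\phi^{n+2}, \psi^{n+2}$ terms against $F_{n+2}$, factoring the remainder as $(\phi\psi)^{\min(m,n-m+2)}$ times a difference of powers. This yields the exact identity
\[F_{n+2} - F_m L_{n-m+2} = (-1)^m F_{n-2m+2},\]
adopting the usual extension $F_{-j} = (-1)^{j+1}F_j$ for negative indices. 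Equivalently, one can derive this from the identity $F_a L_b = F_{a+b}+(-1)^b F_{a-b}$ already used in the proof of Proposition~\ref{prop:FLIdentities}, applied with $a=m$ and $b=n-m+2$.

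Dividing by $F_m$ and subtracting $1/F_m$ on both sides then gives
\[\frac{F_{n+2}-1}{F_m} - L_{n-m+2} = \frac{(-1)^m F_{n-2m+2} - 1}{F_m},\]
so by the triangle inequality it suffices to show $|F_{n-2m+2}| + 1 < F_m$. The two hypotheses control $n-2m+2$ symmetrically: the condition $n-3m \le -4$ gives $n-2m+2 \le m-2$, while $n-m \ge 0$ gives $n-2m+2 \ge 2-m$. Hence $|n-2m+2| \le m-2$, and therefore $|F_{n-2m+2}| \le F_{m-2}$.

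Finally, $F_{m-2} + 1 < F_{m-1} + F_{m-2} = F_m$ precisely when $F_{m-1} > 1$, i.e.\ when $m \ge 4$. The hypotheses $n \ge 11$ together with $n \le 3m-4$ force $m \ge 5$, so the strict inequality holds with room to spare and the lemma follows. The only delicate point is the sign bookkeeping in the Binet identity when $n-2m+2$ is negative, but once the extended-Fibonacci convention is in place, the cases $n \ge 2m-2$ and $n < 2m-2$ collapse to the same formula, so I do not expect any real obstacle.
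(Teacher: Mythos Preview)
Your argument is correct, and it is genuinely different from the paper's proof. The paper expands everything via Binet's formulas and bounds the resulting expression as a sum of five explicit terms, plugging in decimal estimates for each to push the total below~$1$. You instead use the algebraic identity $F_aL_b = F_{a+b} + (-1)^bF_{a-b}$ (the very identity the paper invokes in the proof of Proposition~\ref{prop:FLIdentities}) to obtain the exact equation
\[
\frac{F_{n+2}-1}{F_m} - L_{n-m+2} \;=\; \frac{(-1)^m F_{n-2m+2} - 1}{F_m},
\]
and then the whole lemma reduces to $F_{|n-2m+2|} + 1 < F_m$, which is $F_{m-2}+1 < F_m$ once the two inequality hypotheses pin $|n-2m+2|\le m-2$.

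Your route is cleaner and more informative: it makes transparent that the hypotheses $n-3m\le -4$ and $n-m\ge 0$ are exactly what is needed to sandwich $n-2m+2$ in $[-(m-2),\,m-2]$, and it shows that the condition $n\ge 11$ is used only to force $m\ge 5$ (in fact $m\ge 4$ already suffices, so your proof would go through under the weaker assumption $n\ge 8$). The paper's numerical approach, by contrast, obscures where the slack is and would require redoing the arithmetic if the constants changed. One minor point worth making explicit in a final write-up: the step $|F_{n-2m+2}| = F_{|n-2m+2|} \le F_{m-2}$ uses that $F$ is nondecreasing on nonnegative indices, which is immediate but should be stated.
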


\begin{proof}
Using Binet's formulas, we get that 
\[\frac{F_{n+2}-1}{F_m}-L_{n-m+2}=\frac{\frac{\varphi^{n+2}-(\frac{-1}{\varphi})^{n+2}}{\sqrt{5}}-1}{\frac{\varphi^m-(\frac{-1}{\varphi})^m}{\sqrt{5}}}-\left(\varphi^{n-m+2}+\left(\frac{-1}{\varphi}\right)^{n-m+2}\right)\]
\[=\frac{\varphi^{n+2}-(\frac{-1}{\varphi})^{n+2}-\sqrt{5}}{\varphi^m-(\frac{-1}{\varphi})^m}-\varphi^{n-m+2}-\left(\frac{-1}{\varphi}\right)^{n-m+2}\]
\[=\frac{\varphi^{n+2}-(\frac{-1}{\varphi})^{n+2}-\sqrt{5}-\varphi^{n+2}+(\frac{-1}{\varphi})^m\varphi^{n-m+2}}{\varphi^m-(\frac{-1}{\varphi})^m}-\left(\frac{-1}{\varphi}\right)^{n-m+2}\]
\[=\frac{-(\frac{-1}{\varphi})^{n+2}-\sqrt{5}+(-1)^m\varphi^{n-2m+2}}{\varphi^m-(\frac{-1}{\varphi})^m}-\left(\frac{-1}{\varphi}\right)^{n-m+2}\]
\[=\frac{-(\frac{-1}{\varphi})^{n+2}-\sqrt{5}+\varphi^{n-4m+2}}{\varphi^m-(\frac{-1}{\varphi})^m}+(-1)^m\varphi^{n-3m+2}-\left(\frac{-1}{\varphi}\right)^{n-m+2}\]
\[=\frac{-(\frac{-1}{\varphi})^{n+2}}{\varphi^m-(\frac{-1}{\varphi})^m} - \frac{\sqrt{5}}{\varphi^m-(\frac{-1}{\varphi})^m} + \frac{\varphi^{n-4m+2}}{\varphi^m-(\frac{-1}{\varphi})^m}+ (-1)^m\varphi^{n-3m+2}- \left(\frac{-1}{\varphi}\right)^{n-m+2}.\]

Therefore,
\[\left|\frac{F_{n+2}-1}{F_m}-L_{n-m+2}\right| \leq \frac{{\varphi}^{-n-2}}{\varphi^m-(\frac{-1}{\varphi})^m} + \frac{\sqrt{5}}{\varphi^m-(\frac{-1}{\varphi})^m} + \frac{\varphi^{n-4m+2}}{\varphi^m-(\frac{-1}{\varphi})^m} + \varphi^{n-3m+2} + \varphi^{m-n-2}.\]

We now recall two conditions: $n-3m\leq-4$ and $n\geq11$, from which we get $\frac{n+4}{3}\leq m$, and $m\geq 5$. We estimate the terms above one by one.

For $m \geq 5$, we have $\frac{1}{\varphi^m-(\frac{-1}{\varphi})^m} < 0.09$. For $n \geq 11$, we have $\varphi^{-n-2} < 0.002$. Moreover, $\sqrt{5} < 2.24$. For the third term, we know that $n-4m+2\leq -m-2$, and $m\geq5$, so $n-4m+2\leq-7$. We have $\varphi^{-7} < 0.035$. Combining the first three terms together, we get
\[0.09(0.002 + 2.24 + 0.035) < 0.205.\]

Directly following from the first condition in the lemma's statement, the term $\varphi^{n-3m+2} \le \varphi^{-2} < 0.382$. For the last term we use the condition $n - m \geq 0$ implying that $\varphi^{m-n-2} \le \varphi^{-2} < 0.382$. Combining all estimates together, we get
\[\left|\frac{F_{n+2}-1}{F_m}-L_{n-m+2}\right| \leq 0.205 + 0.382 + 0.382 = 0.969 < 1.\]
\end{proof}

\begin{corollary}
\label{cor:psdiv}
If $n-3m\leq-4$ and $n\geq11$ and $n - m \geq 0$, and $F_m|F_{n+2} -1$, then $\frac{F_{n+2}-1}{F_m} = L_{n-m+2}$.
\end{corollary}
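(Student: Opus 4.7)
The plan is to derive the corollary as an immediate consequence of Lemma~\ref{lemma:estimate}. The hypotheses of the corollary include the three inequalities $n - 3m \leq -4$, $n \geq 11$, and $n - m \geq 0$ needed to invoke the lemma, so I would first apply the lemma to obtain
\[\left|\frac{F_{n+2}-1}{F_m}-L_{n-m+2}\right| < 1.\]

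Next I would use the additional hypothesis $F_m \mid F_{n+2}-1$, which says that $\frac{F_{n+2}-1}{F_m}$ is an integer. Since $L_{n-m+2}$ is also an integer (note that $n - m + 2 \geq 2 > 0$ by the hypothesis $n - m \geq 0$, so the Lucas index is a nonnegative integer), the difference between the two sides is an integer whose absolute value is strictly less than $1$. The only such integer is $0$, so the two quantities must coincide, giving $\frac{F_{n+2}-1}{F_m} = L_{n-m+2}$.

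There is essentially no obstacle here beyond checking that the hypotheses of Lemma~\ref{lemma:estimate} are exactly those assumed in the corollary, which they are. The proof amounts to two sentences: apply the lemma, then use the elementary fact that two integers at distance less than $1$ are equal.
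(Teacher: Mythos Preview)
Your proof is correct and matches the paper's own argument essentially verbatim: apply Lemma~\ref{lemma:estimate} to get that the two quantities differ by less than~$1$, then observe both are integers, hence equal. The extra remark that $n-m+2 \geq 2$ ensures the Lucas index is nonnegative is a harmless clarification not present in the paper.
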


\begin{proof}
We know that $\frac{F_{n+2}-1}{F_m}$ and $L_{n-m+2}$ are integers. By Lemma~\ref{lemma:estimate}, they are less than 1 apart. That means they are equal to each other.
\end{proof}

\subsection{Maximality}
\label{sec:max}

Now, we are ready for the theorem.

\begin{theorem}
\label{thm:Fibmax}
With the exception of $n=3$, we have that $m_{4k+1} = m_{4k+3} = m_{4k+4} = 2k+2$ and $m_{4k+2} = 2k+3$. In addition, we have $z_{4k+1} = z_{4k+2} = L_{2k+1}$, $z_{4k+3} = L_{2k+3}$, and $z_{4k+4} = L_{2k+4}$.
\end{theorem}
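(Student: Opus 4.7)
The plan is to combine the explicit factorizations in equations~(\ref{eq:2k+2|2k+4})--(\ref{eq:2k+2|2k+3}), which give lower bounds on $m_n$ and candidates for $z_n$, with Corollary~\ref{cor:psdiv} and Proposition~\ref{prop:FLIdentities}, which together rule out any larger Fibonacci divisor.

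For the lower bound, equations~(\ref{eq:2k+3|2k+1}), (\ref{eq:2k+2|2k+1}), and (\ref{eq:2k+2|2k+3}) immediately give $S^F_{4k+2} = F_{2k+3}L_{2k+1}$, $S^F_{4k+1} = F_{2k+2}L_{2k+1}$, and $S^F_{4k+3} = F_{2k+2}L_{2k+3}$. Replacing $k$ by $k+1$ in~(\ref{eq:2k+2|2k+4}) yields $S^F_{4k+4} = F_{2k+2}L_{2k+4}$. In each residue class the Fibonacci factor matches the claimed value of $m_n$ and the Lucas factor matches the claimed $z_n$, so at minimum $m_n$ equals the stated value.

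For maximality, assume $n \ge 11$ and, toward a contradiction, that some $m' > m_n$ satisfies $F_{m'} \mid S^F_n$. From $S^F_n = F_{n+2} - 1 < F_{n+2}$ we get $m' \le n+1$, and $F_{n+2} - 1 \equiv F_n - 1 \pmod{F_{n+1}}$ with $0 < F_n - 1 < F_{n+1}$ excludes $m' = n+1$, so $m' \le n$. The lower bound supplies $m_n \ge \lceil n/2 \rceil$, so $m' \ge \lceil n/2 \rceil + 1 \ge (n+4)/3$, and Corollary~\ref{cor:psdiv} yields a second factorization $S^F_n = F_{m'} L_{n-m'+2}$. I would then apply Proposition~\ref{prop:FLIdentities} to compare this representation with the canonical one $F_p L_q$. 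Case~1 forces $m' = p = m_n$, contradicting $m' > m_n$. Cases~2, 3, and 5 can only produce $m' \le 2$ or $S^F_n \le 6$, both impossible when $n \ge 11$ and the canonical indices are at least $5$. Case~4, which encodes $F_\ell L_\ell = F_{2\ell}L_1$, would yield $m' > m_n$ only when $(p, q)$ has $p = q$; direct inspection shows that $(2k+2, 2k+1)$, $(2k+3, 2k+1)$, $(2k+2, 2k+3)$, and $(2k+2, 2k+4)$ never have equal components. This rules out $m' > m_n$, so $m_n$ is maximal and $z_n = L_{n - m_n + 2}$ gives the stated Lucas factor in each residue class.

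Finally, the remaining values $n \le 10$ can be read directly from Table~\ref{tab:trickAnyFib}; the single exception at $n = 3$ arises because $S^F_3 = 4 = F_2 L_3 = F_3 L_0$ is an instance of Case~5 of Proposition~\ref{prop:FLIdentities}, raising $m_3$ to $3$ instead of the predicted $2$. The main obstacle is the Case~4 check above: the identity $F_\ell L_\ell = F_{2\ell}L_1$ could in principle pair our canonical factorization with one of roughly double the Fibonacci index, so verifying that $p \ne q$ across all four residue classes is the most delicate part of the argument.
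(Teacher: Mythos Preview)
Your proof is correct and follows essentially the same route as the paper: the lower bound from equations~(\ref{eq:2k+2|2k+4})--(\ref{eq:2k+2|2k+3}), direct check for $n\le 10$, and then for $n\ge 11$ the combination of Corollary~\ref{cor:psdiv} with Proposition~\ref{prop:FLIdentities} to rule out any larger $m'$. The only cosmetic differences are that the paper verifies $n-3m'\le -4$ via an explicit table of $n-3m_n$ values (rather than your bound $m_n\ge\lceil n/2\rceil$) and dispatches Proposition~\ref{prop:FLIdentities} in one line by noting $m'>m>1$ and the Lucas indices exceed~$4$, whereas you walk through Cases~1--5 individually; the substance is the same.
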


\begin{proof}
The divisibility pattern corresponds to Eq.~\ref{eq:2k+2|2k+4}--\ref{eq:2k+2|2k+3}.

If $n=3$, then $k=0$, and the general pattern gives us $S^F_3 = 4 =F_2 L_3 = 1 \cdot 4$. Unlike all other cases, the next Fibonacci number $F_3$ divides the Lucas number in the product $L_3$, so the partial sum $S^F_3$ is actually divisible by a greater Fibonacci number $F_3$: $S^F_3 = 2 \cdot 2 =2F_3$, which explains the exception for $n=3$. Coincidentally, 2 is a Lucas number, so we can write $S^F_3 = F_3 L_0$.

For $n \leq 10$, we verified by a program that our $m_n$ is maximal. Now, assume that $n \geq 11$.

Since $F_m|F_{n+2}-1$, we know that $(n+2)-m\geq 1$, and therefore $n-m\geq-1$. We leave it to the reader to show that for $n>2$, we have
\[ 1<\frac{F_{n+2} -1}{F_{n+1}} < 2.\]
Thus we can conclude that $\frac{F_{n+2}-1}{F_{n+1}}$ is not an integer, and $n-m\geq 0$.

Now, we consider the value of $n-3m$ for $n$ and $m$ from our statement. The values are summarized in Table~\ref{tab:n-3m}.

\begin{table}[ht!]
\begin{center}
\begin{tabular}{|c|c|c|}
\hline
$n$ 	& $m$		& $n-3m$ \\ \hline
$4k+1$	& $2k+2$	& $-2k-5$ \\	\hline
$4k+2$	& $2k+3$	& $-2k-7$ \\	\hline
$4k+3$	& $2k+2$	& $-2k-3$ \\	\hline
$4k+4$	& $2k+2$	& $-2k-2$ \\	\hline
\end{tabular}
\end{center}
\caption{Value of $n-3m$.}
\label{tab:n-3m}
\end{table}

In particular, for $n \geq 11$, we have $n-3m \leq -4$.

Now, for the sake of contradiction, suppose there exists an $m' > m$ such that $F_{m'} \mid F_{n+2} -1$. Then $m'$ must satisfy $n-3m' < n-3m \le -4$.

By Corollary~\ref{cor:psdiv}, for $n \ge 11$, we must have $\frac{F_{n+2} -1}{F_{m}}= L_{n-m+2}$ and $\frac{F_{n+2} -1}{F_{m'}}= L_{n-m'+2}$. Thus, we have an equation $F_mL_{n-m+2} = F_{m'}L_{n-m'+2}$. As $m' > m > 1$, and $n-m > n-m' \geq 4$, from Proposition~\ref{prop:FLIdentities}, we get $m = m'$, which is a contradiction.
\end{proof}

\subsection{Odd and triangular patterns}

When we started our experiments, we noticed we noticed a pattern for triangular numbers in the first column and a pattern for odd numbers in the second. Now, we understand that Table~\ref{tab:smalltrick} is extremely misleading because of its first two rows. It is true that the sum of one Fibonacci number divides the first Fibonacci number. However, because $F_1 = F_2$, the largest index is 2. We also now know that the value 3 in the second row is an exception to the pattern. Our triangular/odd pattern did not work out. Now, we can explain what happens when $n$ is triangular and, separately, when the index is odd.

Furthermore, in Table~\ref{tab:smalltrick}, we did not have a line for the sum of two elements. However, we know that the trick works for $n=2$. Why did we do this? In our original sequence, we set the condition that the index of the term being multiplied by a constant is less than the total number of terms. In other words, in the original trick, the term we multiply by has to be written out as part of the first $n$ terms. For $n=2$, we are using the third term, which was not allowed from the condition we set at the beginning because $3>2$.

\subsubsection{Triangular pattern}

We were interested in the cases where $n$ is a triangular number. To get the formulas for index $m$ and the corresponding Lucas number, we can take the $\frac{k(k+1)}{2}^\text{th}$ term from the second and third columns of Table~\ref{tab:trickAnyFib}. We can summarize the first few values in Table~\ref{tab:triangular}.

\begin{table}[ht!]
\begin{center}
\begin{tabular}{|c|c|c|c|}
\hline
Number of elements $n$ in the sum 	& index $m$ &	multiplier $z$	& index $i$ of $z$	\\ \hline
1	& 2	& 1	& 1 \\ \hline
3	& 3	& 2	& 0 \\ \hline
6	& 5	& 4	& 3 \\ \hline
10	& 7	& 11	& 5 \\ \hline
15	& 8	& 76	& 9 \\ \hline
21	& 12	& 199	& 11 \\ \hline
28	& 14	& 2207	& 16 \\ \hline
36	& 18	& 15127	& 20 \\ \hline
45	& 24	& 64079	& 23 \\ \hline
55	& 28	& 1149851	& 29 \\ \hline
\end{tabular}
\end{center}
\caption{Sum of the first $n$ Fibonacci numbers equals the $m$-th term times $z$, where $n$ is a triangular number.}
\label{tab:triangular}
\end{table}

The second column is a new sequence A372050
\[2,\ 3,\ 5,\ 7,\ 8,\ 12,\ 14,\ 18,\ 24,\ 28,\ 35,\ 41,\ 46,\ 54,\ 60,\ 68,\ 78,\ 89,\ 97,\ 107,\ 116,\ \ldots .\]


The fourth column is indices of Lucas numbers, which is a new sequence A372051:
\[1,\ 0,\ 3,\ 5,\ 9,\ 11,\ 16,\ 20,\ 23,\ 29,\ 33,\ 39,\ 47,\ 53,\ 62,\ 70,\ 77,\ 87,\ 95,\ 105,\ 117,\ \ldots .\]

\subsubsection{Odd pattern}

Our pattern with the first column of Table~\ref{tab:smalltrick} being triangular numbers and the third column being odd numbers did not continue. However, in Table~\ref{tab:trickAnyFib}, we have a pattern of period 4 for the multiplier $m$, where in each block of 4, there is exactly one odd number in the column for $m$. It corresponds to $n$ with remainders 2 modulo 4. Thus, we get a pattern $S^F_{4k+2} = F_{2k+3}L_{2k+1}$.

Initially, we did not include $n=2$, which corresponds to $m$ being odd, for reasons already stated. Instead, we had a special case of $n = 3$. However, when we include a row for $n = 2$ and exclude rows $n = 1$ and $n = 3$, the new Table~\ref{tab:odd} shows a really nice pattern, with the second column being odd numbers.

\begin{table}[ht!]
\begin{center}
\begin{tabular}{|c|c|c|c|}
\hline
 Number of elements $n$ in the sum 	& index $m$ & multiplier $z$ & index $i$ of $z$ \\ \hline
2	& 3	& 1 	& 1 \\ \hline
6	& 5	& 4 	& 3 \\ \hline
10	& 7	& 11	& 5 \\ \hline
14	& 9	& 29	& 7 \\ \hline
18	& 11	& 76	& 9 \\ \hline
22	& 13	& 199	& 11 \\ \hline
26	& 15	& 521	& 13 \\ \hline
30	& 17	& 1364	& 15 \\ \hline
34	& 19	& 3571	& 17 \\ \hline
\end{tabular}
\end{center}
\caption{Sum of the first $n$ Fibonacci numbers equals the $m$th term times $z$, where the second column is odd.}
\label{tab:odd}
\end{table}

\subsubsection{Combining odd and triangular patterns}

Furthermore, we can find a new pattern where both $n$ is a triangular number greater than 3, and the index $m$ is odd. As we show later in Corollary~\ref{cor:Fib-like}, when $m$ is odd, the corresponding trick can be performed for any Fibonacci-like sequence. These entries correspond to all positive integers $n \equiv 2 \pmod{4}$ that are of the form $\frac{k(k+1)}{2}$. The first few rows are shown in Table~\ref{tab:to}.

\begin{table}[ht!]
\begin{center}
\begin{tabular}{|c|c|c|c|}
\hline
 $n$ & $m$ & mutliplier $z$ & $i$ \\ \hline
6 & 5 & 4 & 3 \\ \hline
10 & 7 & 11 & 5 \\ \hline
66 & 35 & 7881196 & 33 \\ \hline
78 & 41 & 141422324 & 39 \\ \hline
190 & 97 & 71420983074726546239 & 95 \\ \hline
210 & 107 & 8784200221406821330636 & 105 \\ \hline
378 & 191 & 3152564691982405848945267213740827495676 & 189 \\ \hline
\end{tabular}
\caption{Triangular $n$, odd $m$.}
\label{tab:to}
\end{center}
\end{table}

We have previously shown that if $n = 4k + 2$, then $m = 2k + 3$ and $z = L_{2k + 1}$. It follows that the last column of values of $i$ is half of the first column, and the second column for $m$ is the last column plus 2.


Here are the first few numbers of the last column. This is now sequence A372718:
\[\ 3,\ 5,\ 33,\ 39,\ 95,\ 105,\ 189,\ 203,\ 315,\ 333,\ 473,\ 495,\ 663,\ 689,\ \ldots.\]

The triangular numbers modulo 4 form the sequence with period 8: 0, 1, 3, 2, 2, 3, 1, 0, 0 at which point it repeats. Thus, the indices $k$ corresponding to values 2 modulo 4 must be either 3 or 4 $\pmod{8}$. That means Table~\ref{tab:to} starts with two consecutive triangular numbers 6 and 10, then skips the next 6 triangular numbers, then has two consecutive triangular numbers, and so on.

\section{Fibonacci-like sequences}
\label{sec:flike}

\subsection{General case}

We are interested in the following question. When can a trick that works for the Fibonacci sequence be extended to any Fibonacci-like sequence?

\begin{theorem}
\label{thm:Fib-like}
The trick for the Fibonacci numbers using maximal $m_n$ can be extended to any Fibonacci-like sequence if $m_{n-1} +1 = m_{n}$ and $z_n = z_{n-1}$.
\end{theorem}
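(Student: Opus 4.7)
The plan is to reduce the statement to a coefficient-matching check. Any Fibonacci-like sequence with $S_1 = a$ and $S_2 = b$ satisfies $S_k = F_{k-2}\,a + F_{k-1}\,b$ (using the conventions $F_{-1}=1$, $F_0=0$), so
\begin{equation*}
\sum_{k=1}^{n} S_k \;=\; \left(\sum_{k=1}^n F_{k-2}\right) a + \left(\sum_{k=1}^n F_{k-1}\right) b \;=\; F_n\,a + (F_{n+1}-1)\,b,
\end{equation*}
where the two telescoping sums follow from the standard identity $\sum_{i=1}^{m} F_i = F_{m+2}-1$. Similarly, $z_n S_{m_n} = z_n F_{m_n - 2}\,a + z_n F_{m_n - 1}\,b$.

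Because the trick must hold for \emph{every} choice of $a,b$, the extension is equivalent to the two scalar equations
\begin{equation*}
z_n F_{m_n - 2} = F_n \qquad\text{and}\qquad z_n F_{m_n - 1} = F_{n+1} - 1.
\end{equation*}
My first observation is that adding them yields $z_n F_{m_n} = F_{n+2} - 1 = S_n^F$, which is exactly the Fibonacci case and is true by the definition of $m_n, z_n$. So the two conditions are not independent: either one implies the other. Thus the whole theorem collapses to verifying one of them, say the second.

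Now I apply the two hypotheses. Using $m_n - 1 = m_{n-1}$ and $z_n = z_{n-1}$, I can rewrite the second condition as
\begin{equation*}
z_n F_{m_n - 1} = z_{n-1} F_{m_{n-1}} = S_{n-1}^F = F_{n+1} - 1,
\end{equation*}
where the middle equality is precisely the Fibonacci trick applied at $n-1$. This finishes the argument. There is no real obstacle here beyond setting up the correct expansion for $S_k$ and the partial sum; the only place to be careful is the bookkeeping with $F_{-1}$ and $F_0$ in the initial terms of the two telescoping sums, so that the ``$-1$'' in $F_{n+1}-1$ lands on the $b$-coefficient rather than the $a$-coefficient.
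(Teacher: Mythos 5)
Your proof is correct and is essentially the paper's argument: both verify the trick on a basis of the two-dimensional space of Fibonacci-like sequences, using the Fibonacci case at $n$ (definitional for $m_n, z_n$) together with the hypotheses $m_{n-1}+1=m_n$, $z_{n-1}=z_n$ to handle the shifted component, and conclude by linearity. Your coefficient-matching in $a,b$ and the observation that the two scalar equations sum to $z_nF_{m_n}=F_{n+2}-1$ is just a repackaging of the paper's use of the Fibonacci and shifted Fibonacci sequences as that basis.
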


\begin{proof}
Consider the shifted Fibonacci sequence $F_{n-1}$. The sum of the first $n$ terms of this sequence is $F_{n+1} - 1$, which by our assumption equals $zF_{m_{n-1}} = zF_{m_{n}-1}$. But $F_{m_{n}-1}$ is the $m_n$th term of the shifted Fibonacci sequence. It follows that both sums of the Fibonacci sequence and shifted Fibonacci sequence are divisible by their corresponding $m$th term with the same ratio $z$. Given that these two sequences are linearly independent, this property can be extended by linearity to any Fibonacci-like sequence.
\end{proof}

\begin{corollary}
\label{cor:Fib-like}
The trick for the Fibonacci numbers using maximal $m$ can be extended to any Fibonacci-like sequence if and only if $n$ equals 3, or $n \equiv 2 \pmod{4}$.
\end{corollary}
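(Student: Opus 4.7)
The plan is to reduce the extendability question to a single algebraic identity between Fibonacci and Lucas numbers, then sort $n$ by residue mod $4$ using Theorem~\ref{thm:Fibmax} together with the product-to-sum identity $F_p L_q = F_{p+q} + (-1)^q F_{p-q}$. Concretely, a Fibonacci-like sequence with first two terms $a, b$ has $j$-th term $F_{j-2}\, a + F_{j-1}\, b$, so its first $n$ terms sum to $F_n\, a + (F_{n+1}-1)\, b$, while its $m_n$-th term equals $F_{m_n - 2}\, a + F_{m_n - 1}\, b$. Matching the coefficient of $a$ in the equation \textit{sum} $= z_n \cdot (\text{$m_n$-th term})$ gives the single condition $F_n = z_n F_{m_n - 2}$; the $b$-coefficient condition $F_{n+1} - 1 = z_n F_{m_n - 1}$ then follows automatically, because subtracting it from the known identity $F_{n+2} - 1 = z_n F_{m_n}$ of Theorem~\ref{thm:Fibmax} recovers precisely the $a$-condition. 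So the extension to all Fibonacci-like sequences is equivalent to the single equality $F_n = z_n F_{m_n - 2}$.

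For the ``if'' direction, when $n = 4k + 2$, Theorem~\ref{thm:Fibmax} yields $m_n = 2k+3 = m_{n-1} + 1$ and $z_n = L_{2k+1} = z_{n-1}$, so Theorem~\ref{thm:Fib-like} applies immediately; alternatively one can verify the coefficient condition directly via $L_{2k+1} F_{2k+1} = F_{4k+2} + (-1)^{2k+1} F_0 = F_{4k+2}$. The case $n = 3$ is the exception in Theorem~\ref{thm:Fibmax} and is handled by inspection: the first three terms $a, b, a+b$ sum to $2(a+b) = 2 \cdot S_3$, matching $z_3 = 2$ and $m_3 = 3$. For the ``only if'' direction, I substitute the formulas of Theorem~\ref{thm:Fibmax} into $F_n = z_n F_{m_n-2}$ for the remaining residue classes and expand using the product-to-sum identity: for $n = 4k+1$, $L_{2k+1} F_{2k} = F_{4k+1} + (-1)^{2k+1} F_{-1} = F_{4k+1} - 1 \ne F_n$; for $n = 4k+3$ with $k \ge 1$, $L_{2k+3} F_{2k} = F_{4k+3} + (-1)^{2k+3} F_{-3} = F_{4k+3} - 2 \ne F_n$; for $n = 4k+4$, $L_{2k+4} F_{2k} = F_{4k+4} + (-1)^{2k+4} F_{-4} = F_{4k+4} - 3 \ne F_n$. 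In each case the coefficient condition fails, so the trick cannot extend.

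The only mild subtlety is the corner $k=0$ (giving $n = 1$ and $n = 4$), where $F_{m_n - 2} = F_0 = 0$ forces the coefficient condition to demand $F_n = 0$, which fails; the product-to-sum computations above stay valid since $F_{-1} = 1$, $F_{-3} = 2$, $F_{-4} = -3$ via $F_{-k} = (-1)^{k+1} F_k$. The main conceptual step, rather than an obstacle, is the coefficient-matching reduction that turns a statement about all Fibonacci-like sequences into a single Fibonacci-Lucas identity; once that is in place, everything reduces to routine expansion of $L_b F_a$.
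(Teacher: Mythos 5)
Your proof is correct, and the reduction is sound: writing the $j$-th term as $F_{j-2}a+F_{j-1}b$ (with $F_{-1}=1$), the extendability question becomes the pair of coefficient identities $F_n=z_nF_{m_n-2}$ and $F_{n+1}-1=z_nF_{m_n-1}$, and your observation that these are equivalent modulo the defining identity $F_{n+2}-1=z_nF_{m_n}$ is right; I checked your residue-class computations, the negative-index values $F_{-1}=1$, $F_{-3}=2$, $F_{-4}=-3$, and the $k=0$ corners, and they all hold. Your ``if'' direction is essentially the paper's: the paper proves sufficiency (Theorem~\ref{thm:Fib-like}) by checking the Fibonacci sequence and its shift and extending by linearity, which is exactly your coefficient matching in a different basis, and both handle $n=3$ by inspection. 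Where you genuinely diverge is the ``only if'': the paper disposes of it in one sentence by saying that $m_{n-1}+1=m_n$ holds only for $n\equiv 2\pmod 4$ by Theorem~\ref{thm:Fibmax}, implicitly leaving the reader to argue why the trick extending forces that index relation with the same multiplier; you instead verify failure directly, computing $F_{2k}L_{2k+1}=F_{4k+1}-1$, $F_{2k}L_{2k+3}=F_{4k+3}-2$, $F_{2k}L_{2k+4}=F_{4k+4}-3$ from $F_aL_b=F_{a+b}+(-1)^bF_{a-b}$ and noting each misses $F_n$ by a nonzero constant. This buys a self-contained and airtight necessity argument (including the degenerate $n=1,4$ cases where $F_{m_n-2}=0$), at the cost of a few explicit identity manipulations that the paper's terser appeal to the pattern of $m_n$ avoids.
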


\begin{proof}
We need to have $m_n-1 = m_{n-1}$, which only happens when $n \equiv 2 \pmod{4}$ due to Theorem~\ref{thm:Fibmax}.

We need to consider the case of $n=3$ separately, as it was a separate case for the Fibonacci numbers. In this case, we can manually check that the sum of the first three terms is always twice the third term. So it works.
\end{proof}

\subsection{Lucas numbers}
\label{sec:lucas}

Lucas numbers $L_n$, which is sequence A000032 in the OEIS \cite{OEIS}, follow the same recurrence as the Fibonacci numbers: $L_n = L_{n-1} + L_{n-2}$. The initial terms are $L_0 = 2$ and $L_1 = 1$:
\[2,\ 1,\ 3,\ 4,\ 7,\ 11,\ 18,\ 29,\ 47,\ 76,\ 123,\ 199,\ 322,\ 521,\ 843,\ \ldots .\]

We denote the sum of the first $n$ Lucas numbers, excluding the 0th term, as $S^L_n = \sum_{i=1}^nL_i$. This sum equals $L_{n+2} - 3$ and is sequence A027961 in the OEIS \cite{OEIS}. It starts as
\[1,\ 4,\ 8,\ 15,\ 26,\ 44,\ 73,\ 120,\ 196,\ 319,\ 518,\ 840,\ 1361,\ 2204,\ 3568,\ 5775,\ \ldots .\] 

What is the largest $m$ such that $S^L_n = \sum_{i=1}^nL_i$ is divisible by $L_m$? We wrote a program to calculate the answer for small values of $n$, and the results are summarized in Table~\ref{tab:lucas1}.

\begin{table}[ht!]
\begin{center}
\begin{tabular}{|c|c|c|}
\hline
Number of elements $n$ in the sum & index $m$ & multiplier $z$ \\
\hline
1 & 1 & 1 \\
\hline
2 & 3 & 1 \\
\hline
3 & 3 & 2 \\
\hline
4 & 2 & 5 \\
\hline
5 & 1 & 26 \\
\hline
6 & 5 & 4 \\
\hline
7 & 1 & 73 \\
\hline
8 & 3 & 30 \\
\hline
9 & 4 & 28 \\
\hline
10 & 7 & 11 \\ 
\hline
11 & 4 & 74 \\
\hline
12 & 4 & 120 \\
\hline
13 & 1 & 1361 \\
\hline
14 & 9 & 29 \\
\hline
15 & 3 & 892 \\
\hline
16 & 5 & 525 \\
\hline
17 & 1 & 9346 \\
\hline
18 & 11 & 76 \\
\hline
19 & 1 & 24473 \\
\hline
20 & 6 & 2200 \\
\hline
\end{tabular}
\end{center}
\caption{Divisibility of $S^L_n$.}
\label{tab:lucas1}
\end{table}

For $n\equiv2\pmod{4}$, we have the same pattern as for any Fibonacci-like sequence, as expected: $S^L_{4k+2} = L_{2k+3}L_{2k+1}$.

For $n = 4k$, the sum is $L_{4k+2} - 3$. Then, from a well-known equation $L_{2n}+3(-1)^{n}=5F_{n+1}F_{n-1}$, substituting $2k+1$ for $n$, we get $L_{4k+2}-3=5F_{2k+2}F_{2k} = 5F_{k+1}F_{2k}L_{k+1}$. Thus, it is always divisible by $L_{k+1}$. We also see that it is always divisible by 5. After dividing the multiplier by 5 and $L_{k+1}$, we get the following sequence, which is now sequence A372225:
\[1,\ 6,\ 24,\ 105,\ 440,\ 1872,\ 7917,\ 33558,\ 142120,\ 602085,\ 2550384,\ \ldots. \]

We wrote a program to calculate the values for $m$ when $n$ is odd. In the result, all the values were either $1$, $3$, or $4$. Moreover, the values seem to cycle with cycle length 24:
\[1,\ 3,\ 1,\ 1,\ 4,\ 4,\ 1,\ 3,\ 1,\ 1,\ 3,\ 1,\ 4,\ 4,\ 1,\ 1,\ 3,\ 1,\ 1,\ 3,\ 4,\ 4,\ 3,\ 1.\]
Let this sequence be $x_i$, where the index $i$ starts from 1. We wish to show that for any $n = 2k-1$, the sum $L_{n+2} - 3 = L_{2k+1} - 3 $ of the first $n$ Lucas numbers is divisible by $L_{x_k}$.

\begin{proposition}
The value $\frac{L_{2k+1} - 3}{L_{x_k}}$ is an integer.
\end{proposition}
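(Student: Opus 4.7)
The plan is to reduce the proposition to a short finite verification using the periodicity of Lucas numbers modulo small integers. Since the tabulated sequence $x_k$ takes only the three values $1$, $3$, and $4$, with $L_1 = 1$, $L_3 = 4$, and $L_4 = 7$, the claim amounts to three separate divisibility statements: $1$, $4$, or $7$ divides $L_{2k+1} - 3$ in the respective cases. The case $x_k = 1$ is automatic, so only $x_k = 3$ and $x_k = 4$ require argument.

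First I would record the Pisano-like periods. A direct computation shows that $L_n \pmod 4$ is purely periodic with period $6$, cycling through $2, 1, 3, 0, 3, 3$, and that $L_n \pmod 7$ is purely periodic with period $16$. Restricting to odd indices, $L_{2k+1} \pmod 4$ depends only on $k \pmod 3$, while $L_{2k+1} \pmod 7$ depends only on $k \pmod 8$. Reading off the cycles yields the clean characterizations: $L_{2k+1} \equiv 3 \pmod 4$ exactly when $k \equiv 2 \pmod 3$, and $L_{2k+1} \equiv 3 \pmod 7$ exactly when $k \equiv 5$ or $k \equiv 6 \pmod 8$.

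Because $\operatorname{lcm}(3, 8) = 24$, both divisibility patterns depend only on $k \pmod{24}$, matching the claimed period of $x_k$. The proof then closes with a finite check of the $24$ residues $k \in \{1, 2, \ldots, 24\}$: whenever the listed $x_k$ equals $3$, confirm $k \equiv 2 \pmod 3$; whenever $x_k = 4$, confirm $k \equiv 5$ or $6 \pmod 8$; and when $x_k = 1$, no condition is needed. A single $24$-row table records the outcome.

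I do not foresee a real obstacle: the argument is a routine Pisano-period computation followed by case inspection. The only care required is correct bookkeeping against the published $x_k$ list. It is worth emphasizing that the proposition asserts divisibility only; establishing that $x_k$ is actually the \emph{largest} valid index is a distinct question that would additionally require ruling out $L_m \mid L_{2k+1} - 3$ for all $m > x_k$, and is not addressed by the plan above.
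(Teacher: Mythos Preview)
Your proposal is correct and follows essentially the same approach as the paper: both reduce to the periodicity of $L_n$ modulo $4$ (period $6$) and modulo $7$ (period $16$), derive the congruence conditions $k\equiv 2\pmod 3$ and $k\equiv 5,6\pmod 8$ respectively, and then verify the listed residues of $k$ modulo $24$ against these conditions. Your closing remark that maximality is not addressed matches the paper, which explicitly declines to treat maximality for odd $n$.
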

\begin{proof}
For $k$ equaling 0, 1, 3, 4, 7, 9, 10, 12, 15, 16, 18, and 19, modulo 24, divisibility is trivial as $x_{k} = 1$, and $L_{1} = 1$.

For $k$ equaling 2, 8, 11, 17, 20, and 23 modulo 24, we have $x_{k} = 3$, and $L_{x_{k}} = 4$. The Lucas sequence modulo 4 has a period of 6 and is 2, 1, 3, 0, 3, 3. It follows that the sum of the first odd number of $n = 2k - 1$ terms is divisible by 4 if and only if $n\equiv3$ modulo 6, or equivalently, $k \equiv 2$ modulo 3. All our values of $k$ are as such.

For $k$ equaling 5, 6, 13, 14, 21, and 22 modulo 24, we have $x_{k} = 4$ and $L_{x_{k}} = 7$. The Lucas sequence modulo 7 has a period of 16 and is 2, 1, 3, 4, 0, 4, 4, 1, 5, 6, 4, 3, 0, 3, 3, 6. It follows that the sum of the first odd number $n = 2k - 1$ of terms is divisible by 7 if and only if $n\equiv9$ or 11 modulo 16, or equivalently, $k \equiv 5$ or 6 modulo 8. All our values of $k$ are as such.

Thus, this shows divisibility for all values of $k$.
\end{proof}

We will not be looking into maximality for odd $n$. On the other hand, before proving maximality for even $n$, we need to look at the products of Lucas numbers. Similar to Proposition~\ref{prop:FLIdentities} regarding the products of a Fibonacci and Lucas numbers, the products of two Lucas numbers decompose into the products of two Lucas numbers almost uniquely.

\begin{proposition}
\label{prop:LLIdentities}
If $L_a L_b = L_c L_d$, and the indices are nonnegative, then one of the following cases is true.
\begin{enumerate}
\item $a = c$ and $b = d$, corresponding to the equality $L_a L_b = L_a L_b$.
\item $a = d$ and $b = c$, corresponding to the equality $L_a L_b = L_b L_a$.
\item $(a,b,c,d)=(0,0,1,3)$, $(0,0,3,1)$, $(1,3,0,0)$, or $(3,1,0,0)$, corresponding to the equality $L_0 L_0 = L_1 L_3$, which is $2 \cdot 2 = 1 \cdot 4$.
\end{enumerate}
\end{proposition}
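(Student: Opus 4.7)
The proof will follow the same strategy as Proposition~\ref{prop:FLIdentities}, using the Lucas-Lucas product-to-sum identity
\[L_a L_b = L_{a+b} + (-1)^b L_{a-b}\]
in place of the Fibonacci-Lucas one. Applied to both sides, this turns the hypothesis $L_a L_b = L_c L_d$ into
\[L_{a+b} + (-1)^b L_{a-b} = L_{c+d} + (-1)^d L_{c-d}. \qquad (\star)\]
Swapping within each pair and swapping the two pairs both preserve the equation and account for Cases 1 and 2, so without loss of generality I assume $a \geq b$, $c \geq d$, and $a+b \geq c+d$; it then suffices to show that either $(a,b) = (c,d)$ or we land in Case 3.

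For the main case $b, d \geq 1$, the bounds $L_{a-b} \leq L_{a+b-2}$ and $L_{c-d} \leq L_{c+d-2}$, together with the easily checked inequality $L_n + L_{n-2} < L_{n+1}$ for $n \geq 3$ (which follows from $L_n + L_{n-2} = 3L_n - L_{n+1}$), sandwich both sides of $(\star)$ between $L_{n-1}$ and $L_{n+1}$, forcing $a+b \in \{c+d,\,c+d+1\}$ (the residual degenerate case $c+d=2$, i.e.\ $c=d=1$, forces $L_aL_b=1$ and hence $a=b=1$, Case 1). If $a+b = c+d$, cancelling the dominant term in $(\star)$ leaves $(-1)^b L_{a-b} = (-1)^d L_{c-d}$, and positivity of Lucas numbers together with injectivity of $n \mapsto L_n$ on $\mathbb{Z}_{\geq 0}$ (the sequence $2,1,3,4,7,11,\dots$ has no repeats) gives $a-b=c-d$, hence $a=c$ and $b=d$. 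If $a+b = c+d+1$, substituting $L_{a+b} = L_{c+d+1}$ reduces $(\star)$ to
\[L_{c+d-1} = (-1)^d L_{c-d} - (-1)^b L_{a-b},\]
and the four sign choices for $((-1)^b,(-1)^d)$ must be exhausted. Three are ruled out by positivity combined with the bounds $L_{c-d} \leq L_{c+d-2}$ and $L_{a-b} \leq L_{c+d-1}$; the remaining pattern $L_{c+d-1} = L_{c-d} + L_{a-b}$, via the essentially unique decomposition $L_{c+d-1} = L_{c+d-2} + L_{c+d-3}$, pins $\{a-b,c-d\}$ to values whose induced $b,d$ contradict the required parities, so this subcase is empty.

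The edge case (some index is zero) is handled separately. If $b = d = 0$, then $L_a = L_c$ forces $a = c$. If $d = 0$ and $b \geq 1$, the equation becomes $L_{a+b} + (-1)^b L_{a-b} = 2L_c$; a size estimate accommodating the sole monotonicity violation $L_0 = 2 > L_1 = 1$ restricts $c$ to either $\{a+b-2, a+b-1, a+b\}$ (where a direct check yields no solutions) or $c = 0$ with $a+b \leq 4$, giving exactly $(a,b,c,d) = (3,1,0,0)$ and the exceptional factorization $L_3 L_1 = L_0 L_0 = 4$ of Case 3. The reverse subcase $b=0$, $d\geq 1$ combined with $a+b \geq c+d$ would require $L_{c+d+1} > 2L_{c+d}$, which fails for all $c+d \geq 2$.

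The main obstacle will be the off-by-one subcase of the main case, where all four sign patterns must be ruled out with careful parity bookkeeping, and in the edge case the single monotonicity exception $L_0 > L_1$ must be tracked carefully enough to capture exactly the one valid exception $L_0 L_0 = L_1 L_3$ without admitting any spurious extras.
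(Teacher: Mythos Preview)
Your outline is correct and tracks the paper's proof closely: both start from the identity $L_aL_b=L_{a+b}+(-1)^bL_{a-b}$, make the same WLOG reductions $a\ge b$, $c\ge d$, $a+b\ge c+d$, and reduce to the dichotomy $a+b\in\{c+d,\,c+d+1\}$ together with boundary cases where an index is $0$. The only real differences are organizational---the paper splits off $d=0$, $d=1$, $d\ge2$ as separate cases and disposes of the off-by-one subcase $a+b=c+d+1$ by the direct size bound $|(-1)^dL_{c-d}-(-1)^bL_{a-b}|\le L_{a+b-4}+L_{a+b-5}=L_{a+b-3}<L_{a+b-2}$ rather than your sign-pattern-plus-unique-decomposition argument---and when you flesh out the latter you should record the one exceptional sum $L_3=L_0+L_0$ (besides $L_2+L_1$), which however forces $c=d=2$, $a-b=0$, $a+b=5$ and hence yields no integer solution.
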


\begin{proof}
The product of two Lucas numbers $L_a$ and $L_b$ equals $L_{a+b}+L_{a-b}(-1)^b$. We thus have
\[L_{a+b}+(-1)^bL_{a-b}=L_{c+d}+(-1)^dL_{c-d},\]
which we can rearrange to obtain
\begin{equation}
\label{eq:La+b}
L_{a+b}= L_{c+d}+ (-1)^dL_{c-d}-(-1)^bL_{a-b}.
\end{equation}

Without loss of generality, we assume that $a\geq b$, $c\geq d$, and $a+b\geq c+d.$ 

Suppose $b \geq 1$, $c+d \leq a +b-2$, and $d \geq 2$. We look at the right-hand side of Eq.~\ref{eq:La+b}. We have $L_{c+d}\leq L_{a+b-2}$ because by the conditions, $c\geq d\geq2$, so $c+d\geq4$ and the Lucas sequence is strictly increasing starting from term 1. We also have $L_{a-b}\leq L_{a+b-2}$ because the difference between the indices is either 0, resulting in equality, or an even difference of at least 2, resulting in the correct inequality. For any two Lucas numbers of nonnegative indices with a difference of at least 2, the Lucas number with the greater index is strictly greater than the other Lucas number. So we have
\[L_{c-d}=L_{(c+d)-2d}< L_{(a+b-2)-2d+2}=L_{a+b-2d} \leq L_{a+b-4}.\]
We can conclude that $L_{c-d}<L_{a+b-4}$. Thus, we have:
\[L_{a+b}=L_{c+d}+(-1)^dL_{c-d}-(-1)^bL_{a-b}\leq L_{c+d}+L_{c-d}+L_{a-b}\]
\[< L_{a+b-2}+L_{a+b-4}+L_{a+b-2}<L_{a+b-1}+L_{a+b-2}=L_{a+b},\]
which has no solutions.

We are left to check separate cases $b <1$, $a+b = c+d$, $a+b = c+d+1$, and $d<2$.

\textbf{Case 1:} $a+b=c+d$. We have that $(-1)^bL_{a-b}=(-1)^dL_{c-d}.$ As Lucas numbers with nonnegative indices are unique, we have that $a-b=c-d$. Because we also have that $a+b=c+d$, it follows that $b=d$, and thus $a=c$ as well, corresponding to Cases 1 and 2.

From now on, we assume that $a+b> c+d$.

\textbf{Case 2:} $b=0$. We have $2L_{a}= L_{c+d}+ (-1)^dL_{c-d}$. As both $c+d$ and $c-d$ are less than $a+b$, the case is impossible.

From now on, we assume that $a+b> c+d$ and $b>0$.

\textbf{Case 3:} $c,d\leq1$. We must have $L_aL_b=$ 1, 2, or 4, where the nontrivial solutions are $(a,b)=(1,3)$ or $(3,1)$ while $(c,d) = (0,0)$. This corresponds to Case 3.

\textbf{Case 4:} $c\geq2$ and $d=1$. We have that $L_aL_b=L_c$ and $a+b > c+1$. Thus, we have $L_{a+b}+(-1)^bL_{a-b}=L_c$. As $b > 0$, by parity considerations we have $a+b > a-b + 1$. By the growth rate of Lucas numbers, $L_n$ cannot be a sum of two Lucas numbers with indices less than $n-1$, with the exception of $n=2$, which we covered before.

\textbf{Case 5:} $c\geq2$ and $d=0$. We have that $L_aL_b=2L_c$ and $a+b > c$. Thus, we have $L_{a+b}+(-1)^bL_{a-b}=2L_c$. We have $a-b < a+b -1$. We now consider the case when $c < a+b -2$. Then $L_{a-b} + 2L_c \leq L_{a+b-2} + 2L_{a+b-3} = L_{a+b-1} + L_{a+b-3} < L_{a+b}$. Thus, this case is impossible. We are left to check the case when $b >0$ and $c = a+b-2$, or $c= a+b-1$. We have 
\[L_{a+b}+(-1)^bL_{a-b}=2L_{a+b-1} \quad \textrm{ or } \quad L_{a+b}+(-1)^bL_{a-b}=2L_{a+b-2}.\]
After minor manipulation, we get correspondingly
\[(-1)^bL_{a-b}=L_{a+b-3} \quad \textrm{ or } \quad (-1)^bL_{a-b}=-L_{a+b-3},\]
which are impossible due to parity considerations.

What is left to check is the case $a+b = c+d +1$, $b>0$ and $d >1$.

\textbf{Case 6:} $a+b = c+d +1$, $b>0$ and $d >1$. We can rewrite Eq.~\ref{eq:La+b} as 
\[L_{a+b-2}= (-1)^dL_{c-d}-(-1)^bL_{a-b}.\]
If $b=1$, we get $(-1)^dL_{c-d} = 0$, which is impossible. Suppose $b>1$. Then $a-b \leq a+b -4$. Also, $c-d \leq a+b-5$. Thus, $(-1)^dL_{c-d}-(-1)^bL_{a-b} \leq L_{a+b-4} + L_{a+b-5} = L_{a+b-3}$, which is impossible.
\end{proof}

We have the following lemma, which is an analog of Lemma~\ref{lemma:estimate}.

\begin{lemma}
\label{lem:Lucasmcond}
If $n-3m\leq-4$ and $n\geq11$ and $n-m\geq1$, then $\left|\frac{L_{n+2}-3}{L_m}-L_{n-m+2}\right|<1$.
\end{lemma}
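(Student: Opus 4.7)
The plan is to reduce $\frac{L_{n+2}-3}{L_m} - L_{n-m+2}$ to a clean closed form via the product-to-sum identity for Lucas numbers, rather than mimicking the brute-force Binet computation from Lemma~\ref{lemma:estimate}. First I combine the two terms over the common denominator $L_m$ and apply the identity $L_aL_b = L_{a+b}+(-1)^bL_{a-b}$ (the very identity used in the proof of Proposition~\ref{prop:LLIdentities}), extended to negative indices by the standard convention $L_{-k}=(-1)^kL_k$, with $a=n-m+2$ and $b=m$. This gives $L_{n-m+2}L_m = L_{n+2}+(-1)^m L_{n-2m+2}$, and the $L_{n+2}$ terms cancel, leaving
\[
\frac{L_{n+2}-3}{L_m} - L_{n-m+2} \;=\; \frac{-3 - (-1)^m L_{n-2m+2}}{L_m}.
\]
By the triangle inequality, $\left|\frac{L_{n+2}-3}{L_m}-L_{n-m+2}\right| \le \frac{3+L_{|n-2m+2|}}{L_m}$.

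Next I bound $|n-2m+2|$ in terms of $m$ by splitting on the sign of $n-2m+2$. If $n \ge 2m-2$, then $n-2m+2 = (n-3m)+(m+2) \le -4 + m + 2 = m-2$ from $n-3m\le -4$. If $n < 2m-2$, then $2m-n-2 = m+(m-n)-2 \le m-3$ from $n-m \ge 1$. In either case $|n-2m+2| \le m-2$, so $L_{|n-2m+2|} \le L_{m-2}$.

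Finally, the hypotheses $n\ge 11$ and $n-3m\le -4$ force $m \ge (n+4)/3 \ge 5$, so $L_m - L_{m-2} = L_{m-1} \ge L_4 = 7 > 3$, i.e.\ $3+L_{m-2} < L_m$. Combining the two inequalities yields
\[
\left|\frac{L_{n+2}-3}{L_m}-L_{n-m+2}\right| \;\le\; \frac{3+L_{|n-2m+2|}}{L_m} \;\le\; \frac{3+L_{m-2}}{L_m}\;<\;1.
\]

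The only delicate point is justifying the product-to-sum identity when $n-2m+2$ is negative; with the convention $L_{-k}=(-1)^kL_k$ the identity $L_aL_b=L_{a+b}+(-1)^bL_{a-b}$ holds for all integer $a,b$ and is a one-line check from Binet's formula. Everything else is routine bookkeeping, and the resulting argument is considerably shorter than the Binet-based estimate used for the Fibonacci analog, because the clean cancellation after the product-to-sum step spares us the five-term estimate that appeared in Lemma~\ref{lemma:estimate}.
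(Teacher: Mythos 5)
Your proof is correct, and it takes a genuinely different route from the paper. The paper proves this lemma the same way it proves Lemma~\ref{lemma:estimate}: expand $\frac{L_{n+2}-3}{L_m}-L_{n-m+2}$ via Binet's formula into five terms in powers of $\varphi$ and bound each numerically, arriving at $0.277+0.382+0.237=0.896<1$. You instead apply the product-to-sum identity $L_aL_b=L_{a+b}+(-1)^bL_{a-b}$ (valid for all integers once $L_{-k}=(-1)^kL_k$ is adopted) with $a=n-m+2$, $b=m$, which makes the $L_{n+2}$ terms cancel and gives the \emph{exact} error $\frac{-3-(-1)^mL_{n-2m+2}}{L_m}$; the hypotheses then yield $|n-2m+2|\le m-2$ and $m\ge5$, so the numerator is at most $3+L_{m-2}<L_{m-1}+L_{m-2}=L_m$ in absolute value, and your case analysis ($n\ge 2m-2$ using $n-3m\le-4$, and $n<2m-2$ using $n-m\ge1$) checks out. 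This is essentially the same manipulation the paper itself introduces only later, in the fractional-part lemma $\frac{c+L_a}{L_b}\equiv\frac{c-(-1)^bL_{2b-a}}{L_b}$ used for the $m^L_{4k}$ theorem, so your argument unifies the two steps, avoids irrational estimates entirely, and exhibits the error term explicitly (e.g.\ it is $\frac{-3+L_3}{L_5}=\frac{1}{11}$ for $n=11$, $m=5$), at the modest cost of not being a verbatim parallel of the Fibonacci lemma. One point worth making explicit in your write-up: since $L_0=2>L_1=1$, the step $L_{|n-2m+2|}\le L_{m-2}$ is not pure monotonicity and needs $m-2\ge2$; this is harmless because $m\ge5$ follows from $n\ge11$ and $n-3m\le-4$, but that bound should be stated before (not after) you invoke it.
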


\begin{proof}
Using Binet's formulas, we get that
\[ \frac{L_{n+2}-3}{L_m}-L_{n-m+2}=\frac{\varphi^{n+2}+(\frac{-1}{\varphi})^{n+2}-3}{\varphi^{m}+(\frac{-1}{\varphi})^{m}}-\left(\varphi^{n-m+2}+\left(\frac{-1}{\varphi}\right)^{n-m+2}\right)\]
\[=\frac{\varphi^{n+2}+(\frac{-1}{\varphi})^{n+2}-3}{\varphi^{m}+(\frac{-1}{\varphi})^{m}}-\varphi^{n-m+2}-\left(\frac{-1}{\varphi}\right)^{n-m+2}\]
\[=\frac{\varphi^{n+2}+(\frac{-1}{\varphi})^{n+2}-3-\varphi^{n+2}-(\frac{-1}{\varphi})^{m}\varphi^{n-m+2}}{\varphi^{m}+(\frac{-1}{\varphi})^{m}}-\left(\frac{-1}{\varphi}\right)^{n-m+2}\]
\[=\frac{(\frac{-1}{\varphi})^{n+2}-3-(-1)^{m}\varphi^{n-2m+2}}{\varphi^{m}+(\frac{-1}{\varphi})^{m}}-\left(\frac{-1}{\varphi}\right)^{n-m+2}\]
\[=\frac{(\frac{-1}{\varphi})^{n+2}-3+\varphi^{n-4m+2}}{\varphi^{m}+(\frac{-1}{\varphi})^{m}}-(-1)^m\varphi^{n-3m+2}-\left(\frac{-1}{\varphi}\right)^{n-m+2}\]
\[=\frac{(\frac{-1}{\varphi})^{n+2}}{\varphi^{m}+(\frac{-1}{\varphi})^{m}}-\frac{3}{\varphi^{m}+(\frac{-1}{\varphi})^{m}}+\frac{\varphi^{n-4m+2}}{\varphi^{m}+(\frac{-1}{\varphi})^{m}}-(-1)^m\varphi^{n-3m+2}-\left(\frac{-1}{\varphi}\right)^{n-m+2}.\]

Therefore, 
\[\left|\frac{L_{n+2}-3}{L_m}-L_{n-m+2}\right|\leq\frac{{\varphi}^{-n-2}}{\varphi^{m}+(\frac{-1}{\varphi})^{m}}+\frac{3}{\varphi^{m}+(\frac{-1}{\varphi})^{m}}+\frac{\varphi^{n-4m+2}}{\varphi^{m}+(\frac{-1}{\varphi})^{m}}+\varphi^{n-3m+2}+\varphi^{m-n-2}.\]

From the two conditions $n-3m\leq-4$ and $n\geq11$, we get $\frac{n+4}{3}\leq m$, and $m\geq5$. We estimate the terms above one by one. For $m\geq 5$, we have $\frac{1}{\varphi^{m}+(\frac{-1}{\varphi})^{m}}<0.091$. For $n\geq11$, we have $\varphi^{-n-2}<0.002$. We know that $n-4m+2\leq-m-2$ and $m\geq5$, so $n-4m+2\leq-7$. We have $\varphi^{-7}<0.035$. Combining the first three terms together, they do not exceed
\[0.091(0.002+3+0.035)<0.277.\]
From $n-3m\leq-4$, we get that $n-3m+2\leq-2$, so the term $\varphi^{n-3m+2}\leq\varphi^{-2}<0.382$. From the condition $n-m\geq1$, it is implied that the term $\varphi^{m-n-2}\leq\varphi^{-3}<0.237$.

Combining all estimates together, we get
\[\left|\frac{L_{n+2}-3}{L_m}-L_{n-m+2}\right|\leq0.277+0.382+0.237=0.896<1.\]
\end{proof}

This leads us to the analog of Theorem~\ref{thm:Fibmax}, where we denote the maximum index of a Lucas number that divides the sum of the first $n$ Lucas numbers as $m^L_n$.

\begin{theorem}
We have $m^L_{4k+2} = 2k+3$.
\end{theorem}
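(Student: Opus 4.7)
The plan is to mirror the proof of Theorem~\ref{thm:Fibmax}, with Lemma~\ref{lem:Lucasmcond} playing the role of Corollary~\ref{cor:psdiv} and Proposition~\ref{prop:LLIdentities} playing the role of Proposition~\ref{prop:FLIdentities}. The lower bound $m_{4k+2}^L \geq 2k+3$ is already established earlier in the section: applying the product-to-sum identity $L_aL_b = L_{a+b}+(-1)^bL_{a-b}$ with $(a,b)=(2k+3,2k+1)$ yields $L_{2k+3}L_{2k+1} = L_{4k+4} + (-1)^{2k+1}L_2 = L_{4k+4}-3 = S^L_{4k+2}$, so $L_{2k+3}$ divides $S^L_{4k+2}$. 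It remains to prove maximality.

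For the maximality, suppose toward contradiction that some $m' > 2k+3$ satisfies $L_{m'}\mid S^L_{4k+2}$. First I would restrict to $k\geq 3$ (so $n=4k+2\geq 11$); the finitely many cases $k\in\{0,1,2\}$ are read off from Table~\ref{tab:lucas1}, where the observed values of $m$ match $2k+3$ exactly. For $k\geq 3$ and $m'$ in the range $2k+4\leq m'\leq 4k+1 = n-1$, all three hypotheses of Lemma~\ref{lem:Lucasmcond} are routine to verify: $n-3m' \leq 4k+2-3(2k+4) = -2k-10\leq -4$, $n\geq 11$, and $n-m'\geq 1$. The lemma then forces $(L_{n+2}-3)/L_{m'} = L_{n-m'+2}$, and combining with the existence formula gives the equation $L_{m'}L_{4k+4-m'} = L_{2k+3}L_{2k+1}$ with nonnegative indices. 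By Proposition~\ref{prop:LLIdentities}, the unordered pairs $\{m',4k+4-m'\}$ and $\{2k+3,2k+1\}$ either agree, forcing $m'\in\{2k+1,2k+3\}$ (both $\leq 2k+3$, contradicting $m'>2k+3$), or fall into the exceptional identity $L_0L_0=L_1L_3$ of value $4$, which requires $\{2k+3,2k+1\}=\{1,3\}$ and therefore $k=0$, outside the current range.

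The remaining possibilities are $m'\geq n = 4k+2$, which must be dispatched by direct bounds rather than the lemma. If $m'\geq n+2$, then $L_{m'}\geq L_{n+2} > L_{n+2}-3 > 0$, so $L_{m'}$ cannot divide the positive integer $S^L_n$. If $m' = n+1$, writing $L_{n+2}-3 = L_{n+1}+L_n-3$ reduces divisibility by $L_{n+1}$ to $L_{n+1}\mid L_n-3$, which is impossible for $n=4k+2\geq 6$ because $0 < L_n-3 < L_{n+1}$; the borderline $n=2$ gives only $m'=3=2k+3$, not strictly larger. If $m' = n$, writing $L_{n+2}-3 = 2L_n + L_{n-1}-3$ reduces the requirement to $L_n\mid L_{n-1}-3$, again impossible since $|L_{n-1}-3|<L_n$ for $n\geq 4$ and the only solution $L_{n-1}=3$ would force $n=3$, which is odd. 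This exhausts all $m'>2k+3$ and completes the contradiction.

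The main obstacle is less algebraic than bookkeeping: one must carefully partition the range of hypothetical $m'$ into the regime where Lemma~\ref{lem:Lucasmcond} applies and the small handful of edge values $m'\in\{n,n+1,n+2,\ldots\}$ that lie outside the lemma's hypotheses, while also checking that the exceptional case of Proposition~\ref{prop:LLIdentities} is genuinely incompatible with the index pair $(2k+3,2k+1)$ for any $k\geq 0$. Once this case analysis is set up, each piece reduces to a one-line inequality or a direct table lookup.
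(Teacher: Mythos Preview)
Your proposal is correct and follows essentially the same approach as the paper's own proof: small cases by table lookup, lower bound from the factorization $S^L_{4k+2}=L_{2k+3}L_{2k+1}$, rule out $m'\in\{n,n+1,n+2,\ldots\}$ by direct size estimates, and for the remaining range invoke Lemma~\ref{lem:Lucasmcond} followed by Proposition~\ref{prop:LLIdentities}. If anything, your write-up is slightly more careful than the paper's in explicitly disposing of the exceptional case $L_0L_0=L_1L_3$ of Proposition~\ref{prop:LLIdentities} and in separating the regimes $m'\le n-1$ and $m'\ge n$.
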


\begin{proof}
For $n \leq 10$, we verified by a program that our $m$ is maximal. Now assume $n \geq 11$.

From Corollary~\ref{cor:Fib-like}, we have $S_{4k+2}^L = L_{2k+3}L_{2k+1}$, which means $m^L_{4k+2} \geq 2k+3$. Next, for the sake of contradiction, suppose there exists an $m' > 2k+3$ such that $L_{m'}|L_{n+2}-3$. We start by showing that $m'$ must be smaller than $n$. First, $m'$ cannot be greater than $n+1$, as, otherwise, $L_{m'}>L_{n+2}-3 = S_{n}^L$. If $m'=n+1$, then $\frac{L_{n+2}-3}{L_{n+1}} =1+\frac{L_{n}-3}{L_{n+1}}$, and for $n>2$, we have $0<\frac{L_{n}-3}{L_{n+1}}<1$, so $\frac{L_{n+2}-3}{L_{n+1}}$ cannot be an integer. If $n=m'$, then $\frac{L_{n+2}-3}{L_{n}}=1+\frac{L_{n+1}-3}{L_{n}}=2+\frac{L_{n-1}-3}{L_{n}}$, and for $n>3$, we have $0<\frac{L_{n-1}-3}{L_{n}}<1$, so $\frac{L_{n+2}-3}{L_{n}}$ cannot be an integer. So given that $n>10$, we can conclude that $n-m' \geq1$.

Now, we consider the value of $n-3m'$. By our assumption $m' \geq 2k+3$. We conclude that $n-3m'\leq-4$. It follows that $m'$ satisfies the conditions of Lemma~\ref{lem:Lucasmcond}, implying that $L_{n+2}-3 = L_{m'}L_{n-m'+2}$. Finally, from Proposition~\ref{prop:LLIdentities}, the decomposition of $L_{n+2}-3$ into the product of two Lucas numbers must be unique, and thus $m'=m=2k+3$.
\end{proof}

Before proving the maximality for $n=4k$, we need the following lemma. For any two real numbers $a$ and $b$, let us say that $a\equiv b$ if and only if the fractional parts of $a$ and $b$ are equivalent.
\begin{lemma}
For any two integers $a$ and $b$ and a constant $c$, the following is true: 
\[\frac{c+L_a}{L_b}\equiv \frac{c-(-1)^bL_{2b-a}}{L_b}.\]
\end{lemma}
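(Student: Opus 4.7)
The plan is to recast the equivalence of fractional parts as an integer divisibility statement. Two real numbers have the same fractional part precisely when their difference is an integer, so subtracting the two sides of the claimed equivalence and observing that the constant $c$ cancels reduces the lemma to showing
\[L_b \;\bigm|\; L_a + (-1)^b L_{2b-a}.\]
This is a statement purely about Lucas numbers, with $c$ playing no further role.

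The main tool will be the standard product-to-sum identity
\[L_m L_n = L_{m+n} + (-1)^n L_{m-n},\]
which is immediate from Binet's formula $L_n = \varphi^n + \psi^n$ with $\psi = -1/\varphi$, and which is the Lucas analogue of the identity $F_a L_b = F_{a+b} + (-1)^b F_{a-b}$ already invoked in the proof of Proposition~\ref{prop:FLIdentities}. The natural substitution is $m = a-b$ and $n = b$, giving
\[L_b L_{a-b} = L_a + (-1)^b L_{a-2b}.\]
The left-hand side is visibly a multiple of $L_b$, so the right-hand side is as well, and we are one sign manipulation away from the target expression.

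The remaining step is to convert $L_{a-2b}$ into a signed multiple of $L_{2b-a}$ using the extension $L_{-k}=(-1)^k L_k$, which is legitimate because Binet's formula makes sense for all integer indices. I expect the main obstacle to be the careful bookkeeping of parities: the factors $(-1)^b$ and $(-1)^{a-2b}$ that arise need to be reconciled with the single factor $(-1)^b$ in the stated form, and the index $2b-a$ may be negative for some $a, b$, forcing one to invoke $L_{-k}=(-1)^kL_k$ in the opposite direction. I would handle this by splitting on the parity of $a$ (or equivalently of $b-a$), checking in each case that the signs align so that the right-hand side of the identity equals precisely $L_a + (-1)^b L_{2b-a}$. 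Once the divisibility is verified, the lemma follows immediately from the reduction in the first paragraph.
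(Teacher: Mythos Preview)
Your approach is the same as the paper's: subtract the integer $L_{a-b}$ and invoke the Lucas product-to-sum identity. The paper does exactly this in two lines.

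However, the parity split you plan will not close. From $L_bL_{a-b}=L_a+(-1)^bL_{a-2b}$ and $L_{a-2b}=(-1)^{a}L_{2b-a}$ you obtain
\[
L_bL_{a-b}=L_a+(-1)^{a+b}L_{2b-a},
\]
which matches the target sign $(-1)^b$ only when $a$ is even. For odd $a$ the lemma as stated is in fact false: with $a=3$, $b=2$, $c=0$ one gets $\frac{L_3}{L_2}=\frac{4}{3}$ and $\frac{-L_1}{L_2}=-\frac{1}{3}$, whose difference $\frac{5}{3}$ is not an integer. The paper's proof conceals this by quoting the product identity with the exponent on the wrong factor, which happens to cancel the error in the statement.

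None of this harms the subsequent theorem, since the lemma is applied there only with $a\in\{4k+2,\,2k-2x\}$, both even. The clean fix is to add the hypothesis that $a$ is even; then $(-1)^{a+b}=(-1)^b$ and your argument goes through with no case split needed.
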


\begin{proof}
We have
\[\frac{c+L_a}{L_b}\equiv \frac{c+L_a}{L_b}-L_{a-b}=\frac{c+L_{a}-L_bL_{a-b}}{L_b}.\]
By the well-known identity $L_nL_m=L_{n+m}+(-1)^nL_{n-m}$, we get
\[\frac{c+L_{a}-L_bL_{a-b}}{L_b} = \frac{c+L_a-(L_a+(-1)^bL_{2b-a})}{L_b}=\frac{c-(-1)^bL_{2b-a}}{L_b}.\]
\end{proof}

\begin{theorem}
We have $m^L_{4k} = k+1$.
\end{theorem}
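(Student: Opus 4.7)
The inequality $m^L_{4k} \ge k+1$ is already established in the paragraph preceding the theorem (from $L_{4k+2}-3 = 5F_{k+1}L_{k+1}F_kL_k$). So the content is the upper bound $m^L_{4k} \le k+1$, and I would argue by contradiction: assume $L_{m'} \mid S^L_{4k}$ for some $m' > k+1$. Small cases ($k \in \{1,2\}$, and $m' \in \{3,4\}$ which can exceed $k+1$ only when $k \le 2$) are verified directly from Table~\ref{tab:lucas1}, so the main work is to dispose of $k \ge 3$ with $m' \ge 5$.

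\textbf{Core reduction.} The key tool is the periodicity congruence $L_{X+2m'} \equiv (-1)^{m'+1}L_X \pmod{L_{m'}}$, which follows from the product-to-sum identity $L_{X+m'}L_{m'} = L_{X+2m'} + (-1)^{m'}L_X$; in the language of the fractional-parts lemma just proved, this is a two-step iteration of the reflection $a \mapsto 2b-a$. Reducing $4k+2$ modulo $2m'$ yields $L_{4k+2} \equiv \varepsilon L_r \pmod{L_{m'}}$ for some $r \in [0,m']$ and a sign $\varepsilon \in \{\pm 1\}$ that depends on both the quotient $q$ and the parity of $m'$. For $m' \ge 5$ one has $L_{m'-1}+3 < L_{m'}$ (since $L_{m'-2} \ge 4 > 3$), so the divisibility $L_{m'} \mid L_{4k+2} - 3$ forces $\varepsilon L_r = 3$ exactly, hence $r = 2$ and $\varepsilon = +1$. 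Unwinding gives $|4k+2 - 2qm'| = 2$, equivalently $m' \mid 2k$ or $m' \mid 2(k+1)$.

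\textbf{Finishing.} Combined with $m' > k+1$, the only divisors in range are $m' = 2k$ and $m' = 2k+2$. For each candidate I would apply the periodicity identity explicitly: for $m' = 2k$ one has $4k+2 = 2m'+2$, so $L_{4k+2} \equiv (-1)^{m'+1}L_2 = -3 \pmod{L_{m'}}$, and for $m' = 2k+2$ one has $4k+2 = 2m'-2$, so a single reflection again gives $L_{4k+2} \equiv -L_2 = -3 \pmod{L_{m'}}$. In both cases the true sign $\varepsilon$ is $-1$, not $+1$, so the divisibility collapses to $L_{m'} \mid 6$, which is impossible because $L_{2k} \ge L_4 = 7$ and $L_{2k+2} \ge L_6 = 18$ for $k \ge 2$. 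The hardest part of this plan is the sign bookkeeping: the magnitude estimate alone only narrows $m'$ to $\{2k,2k+2\}$, and one must then compute $\varepsilon$ explicitly at those values to see that it is $-1$, converting the forced condition $\varepsilon L_r = 3$ into the impossible $L_{m'} \mid 6$.
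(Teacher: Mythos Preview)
Your argument is correct and takes a genuinely different route from the paper's. The paper applies its fractional-parts lemma twice to transform the hypothesis $L_{k+1+x}\mid L_{4k+2}-3$ into $L_{k+1+x}\mid L_{4x+2}-3$, then invokes the Binet-based estimate Lemma~\ref{lem:Lucasmcond} to force $L_{4x+2}-3 = L_{k+1+x}L_{3x-k+1}$, and finally reaches a contradiction because $L_{4x+2}-3 = 5F_{2x+2}F_{2x}$ is divisible by $5$ while no Lucas number is. You instead work purely arithmetically: the periodicity $L_{X+2m'}\equiv (-1)^{m'+1}L_X\pmod{L_{m'}}$ pins down the residue of $L_{4k+2}$ modulo $L_{m'}$ as $\varepsilon L_r$ with $0\le r\le m'$, a size bound forces $r=2$ and $\varepsilon=+1$, the resulting divisor condition $m'\mid 2k$ or $m'\mid 2(k+1)$ together with $m'>k+1$ leaves only $m'\in\{2k,2k+2\}$, and at those two (even) values you compute $\varepsilon=-1$ directly, collapsing the divisibility to the impossible $L_{m'}\mid 6$. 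Your approach sidesteps the analytic machinery (Binet's formula, the estimate lemma, and Proposition~\ref{prop:LLIdentities}) entirely and in fact yields the explicit congruence $L_{4k+2}-3\equiv -6\pmod{L_{2k}}$ and $\pmod{L_{2k+2}}$; the paper's route, by contrast, stays within the uniform framework it has built and reuses the same lemma that drives the $n\equiv 2\pmod 4$ case. One small point worth making explicit in your write-up: since $4k+2$ is even and subtracting multiples of $2m'$ preserves parity, the intermediate index $s$ is always even, so the reflection sign really is $(-1)^{m'+1}$ rather than the general $(-1)^{m'+s+1}$; this is what makes your sign computation at $m'=2k$ and $m'=2k+2$ clean.
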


Because $n-3m = 4k-3(k+1)$ is not less than or equal to $-4$ for $k>1$, we cannot guarantee that $\frac{L_{4k+2}-3}{L_{k+1}}$ is a Lucas number using Lemma~\ref{lem:Lucasmcond}, thus we cannot use the lemma about unique Lucas number products either. Therefore, we must use an alternative method to prove this.

\begin{proof}
First of all, $S^{L}_{4k}$ is divisible by $L_{k+1}$ because
\[ S^{L}_{4k} = L_{4k+2}-3 = 5F_{2k+2}F_{2k} = 5F_{k+1}F_{2k}L_{k+1}.\]
Now we prove the maximality. Assume for the sake of contradiction that a positive integer $x$ exists such that $L_{k+1+x}|L_{4k+2}-3$. Using the lemma above, we get

\[\frac{-3+L_{4k+2}}{L_{k+1+x}} \equiv \frac{-3-(-1)^{k+1+x}L_{2x-2k}}{L_{k+1+x}}\equiv0.\]

We use the property that $L_{-n}=(-1)^nL_n$, then we multiply by $(-1)^{k+x}$ to get

\[\frac{-3-(-1)^{k+1+x}L_{2x-2k}}{L_{k+1+x}}\equiv\frac{-3-(-1)^{k+1+x}L_{2k-2x}}{L_{k+1+x}}\equiv\frac{3(-1)^{k+1+x}+L_{2k-2x}}{L_{k+1+x}}\equiv 0.\]

We apply the above lemma again, then we multiply by $(-1)^{k+x}$ to get

\[\frac{3(-1)^{k+x+1}+L_{2k-2x}}{L_{k+1+x}} \equiv \frac{3(-1)^{k+1+x}-(-1)^{k+1+x}L_{4x+2}}{L_{k+1+x}}\equiv \frac{-3+L_{4x+2}}{L_{k+1+x}}\equiv0\]

Thus, we have 
\begin{equation} \label{eq:lucaspf}
\frac{L_{4x+2}-3}{L_{k+1+x}} \equiv 0.
\end{equation}
Note that Eq.~\ref{eq:lucaspf} is different from our divisibility assumption $L_{k+1+x}|L_{4k+2}-3$ as we have $L_{4x+2}$ (not $L_{4k+2}$) in the numerator. Now we check whether we can apply Lemma~\ref{lem:Lucasmcond} on Eq.~\ref{eq:lucaspf}.

The first condition $n-3m\leq-4$ corresponds to $4x+2 - 3(k+x+1) \le -4$, which is equivalent to $x \le 3k - 1$. The original assumption $L_{k+1+x}|L_{4k+2}-3$ implies $k+1+x \leq 4k+1$, which is equivalent to $x \le 3k$. Moreover, $k+1+x$ cannot be $4k+1$ as that would imply $L_{4k+1}|L_{4k+1}+L_{4k}-3$, i.e., $L_{4k+1}|L_{4k}-3$, which is impossible for large $k$. Thus, the first condition is satisfied.

The second condition $n\geq11$ is satisfied when $x \ge 3$. If $x < 3$, we have two cases of $x$ equaling 1 or 2. Thus, the numerator in Eq.~\ref{eq:lucaspf} is one of $L_6 - 3 = 15$ and $L_{10}-3=120$. The largest Lucas number that is a factor is $L_3 = 4$, corresponding to $k < 3$, the case covered by our program.

The last condition $n-m\geq 1$ corresponds to $4x+2 - (k+x+1) \geq 1$, which is equivalent to  $k+1+x \leq 4x+1$, which is satisfied due to Eq.~\ref{eq:lucaspf}.

So all conditions of Lemma~\ref{lem:Lucasmcond} are satisfied, and we get 
\[ L_{4x+2}-3 = L_{k+1+x}L_{3x-k+1}. \]
However, on the other hand, using a well-known equation, we have that
\[ L_{4x+2}-3 = L_{(2x+2)+2x} - (-1)^{2x}L_{(2x+2)-2x} = 5F_{2x+2}F_{2x} \]
has a factor 5, while no Lucas number can have a factor 5 (as a Lucas number mod 5 can only be 2, 1, 3, and 4). So the equation above is impossible and we have a contradiction.
\end{proof}

\section{General Lucas sequences}
\label{sec:gen}

\subsection{Definitions}

The Lucas sequences $U_{n}(P,Q)$ and $V_{n}(P,Q)$ are integer sequences that satisfy the recurrence relation
$x_{n}=P x_{n-1} - Q x_{n-2},$
where $P$ and $Q$ are fixed integers. We also have initial conditions: $U_{0}(P,Q) = 0$, $U_{1}(P,Q) = 1$, $V_{0}(P,Q) = 2$, and $V_{1}(P,Q) = P$. Sometimes sequences $U_n$ are called Lucas sequences of the first kind, while sequences $V_n$ are called Lucas sequences of the second kind. Sometimes, we drop parameters $P$ and $Q$ and write $U_n$ for our sequence. When $(P,Q) = (1,-1)$ we get Fibonacci-like sequences. It is natural to ask whether our trick for Fibonacci-like sequences can be extended to other Lucas sequences.

To start, we need to calculate the sum of the first $n$ terms of $U_n(P, Q)$ beginning from index 1. We denote this sum as $S_n^U$, and it equals the following
\begin{equation}
S_n^U = \sum_{i=1}^{n} U_i(P, Q)=\frac{QU_{n}-U_{n+1}+1}{Q-P+1}.
\label{eq:USum}
\end{equation}

Consider the shifted sequence, where the $n$th term is $U_{n-1}(P, Q)$. Its sum of the first $n$ terms is $S_{n-1}^U$.

\begin{theorem}
\label{thm:generalrule}
If $S_n^U = U_{m}z$ and $S_{n-1}^U = U_{m-1}z$, then the trick works for any Lucas sequence with the same recurrence as $U$.
\end{theorem}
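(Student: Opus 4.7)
The plan is to adapt the linearity argument used to prove Theorem~\ref{thm:Fib-like}. The set of sequences satisfying the recurrence $x_n = Px_{n-1} - Qx_{n-2}$ is a two-dimensional vector space, and both the recurrence and the divisibility identity ``sum of first $n$ terms equals $z$ times the $m$th term'' are preserved under linear combinations. So it suffices to exhibit two linearly independent solutions of the recurrence for which the identity holds with the same $m$ and $z$; then every sequence with that recurrence inherits the identity by taking the appropriate linear combination.

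The first such sequence will be $U_n$ itself, which satisfies $S_n^U = z U_m$ directly by hypothesis. For the second sequence I will use the shifted sequence $T_n := U_{n-1}$ for $n \geq 1$, defined by the initial values $T_1 = U_0 = 0$ and $T_2 = U_1 = 1$ together with the same recurrence. Its partial sum telescopes:
\[\sum_{i=1}^n T_i \;=\; \sum_{i=1}^n U_{i-1} \;=\; U_0 + \sum_{j=1}^{n-1} U_j \;=\; S_{n-1}^U,\]
which by the second hypothesis equals $z U_{m-1} = z T_m$. So $T$ also satisfies the identity with the same index $m$ and multiplier $z$.

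I will then verify the linear independence of $U$ and $T$ by inspecting initial terms: $(U_1,U_2)=(1,P)$ and $(T_1,T_2)=(0,1)$ form a basis of the two-dimensional solution space because the corresponding $2\times 2$ matrix has determinant $1$. Any sequence $X_n$ obeying the recurrence can therefore be written uniquely as $X_n = \alpha U_n + \beta T_n$, and then
\[\sum_{i=1}^n X_i \;=\; \alpha \sum_{i=1}^n U_i + \beta \sum_{i=1}^n T_i \;=\; \alpha z U_m + \beta z T_m \;=\; z X_m,\]
which is exactly the trick.

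There is no real obstacle here: the argument is purely algebraic, the only subtlety being careful index bookkeeping in the partial sum for $T$. That bookkeeping is exactly what forces the hypothesis to control the identity simultaneously at $n$ and $n-1$, rather than at $n$ alone, mirroring the situation in Corollary~\ref{cor:Fib-like} where the analogous Fibonacci-like extension only held when $m_{n-1}+1 = m_n$ with matching multipliers.
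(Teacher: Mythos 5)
Your proposal is correct and follows essentially the same route as the paper: the paper's proof simply notes that any sequence with the same recurrence is a linear combination of $U_n$ and the shifted sequence $U_{n-1}$, whose partial sums satisfy the identity with the same $m$ and $z$ by the two hypotheses, and extends by linearity. Your version just spells out the telescoping of the shifted partial sum and the basis check that the paper leaves implicit.
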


\begin{proof}
Any Lucas sequence with the same recursion is a linear combination of $U_n$ and $U_{n-1}$. Thus, the trick works.
\end{proof}

\subsection{General discussion}

We are looking at the divisibility of expression
\[\frac{QU_{n}-U_{n+1}+1}{Q-P+1}.\]
Thus, it is useful to check when it simplifies. 

One way in which it simplifies us when the denominator's absolute value is 1: $Q-P+1 = \pm 1$. This happens when $Q=P$, or $Q = P-2$, and we get
\[S_n^U = \pm (QU_{n} - U_{n+1}+1).\]

The other case in which is simplifies is when $P=1$; in this case we get
\[S_n^U = \frac{-U_{n+2}+1}{Q}.\]

Both conditions $P = 1$ and $Q-P+1 = \pm 1$ are met for $U_n(1,-1)$ and $U_n(1,1)$, where the first sequence is Fibonaccis. We look at $U_n(1,1)$ in Section~\ref{sec:U11}.

We also consider examples where only one condition is met: a) a sequence $U(-1,-1)$ for the case $Q=P$ in Section~\ref{sec:U-1-1}; b) a sequence $U(3,1)$ for the case $Q=P-2$ in Section~\ref{sec:U31}.

We also look at some famous general Lucas sequences. For Jacobsthal numbers $J_n = U_n(1,-2)$ we have $P=1$ and $Q-P+1 = -2$. But it is easy to be divisible by 2, so we studied this sequence in Section~\ref{sec:jacob}. Another famous sequence is Pell numbers $P_n = U_n(2,-1)$, which we study in Section~\ref{sec:pell}.

We already covered Fibonacci and Lucas numbers. We continue with the other sequences below.

\subsection{Examples}

\subsubsection{$U(-1, -1)$}
\label{sec:U-1-1}

Sequence $U(-1, -1)$ is sequence A039834, which is the sequence of signed Fibonacci numbers, $U_n(-1,-1) = (-1)^{n-1}F_n$:
\[0,\ 1,\ -1,\ 2,\ -3,\ 5,\ -8,\ 13,\ -21,\ \ldots.\]

Partial sums $S_{n}$ starting from index 1 is sequence A355020
\[1,\ 0,\ 2,\ -1,\ 4,\ -4,\ 9,\ -12,\ 22,\ -33,\ \ldots,\]
which equals
\[S_{n} = (-1)^{n-1} F_{n-1} + 1 = -U_{n-1}+1.\]

As this sequence is similar to Fibonacci numbers, the proof of maximality follows from the proof of maximality for the Fibonacci sequence. We denote the maximum index for the term of the sequence $U$ that divides the sum of the first $n$ terms of this sequence as $m^U_n$.

\begin{theorem}
We have the maximum divisibility index $m^U_{4k} = m^U_{4k+2} = 2k$, $m^U_{4k+1} = 2k-1$, and $m^U_{4k+3} = 2k+2$. Correspondingly, the multipliers are $z^U_{4k} = L_{2k-1}$, $z^U_{4k+2} = L_{2k+1}$, $z^U_{4k+1} = L_{2k+1}$, and $z^U_{4k+3} = -L_{2k}$.
\end{theorem}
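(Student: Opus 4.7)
The plan is to exploit the identity $U_n(-1,-1) = (-1)^{n-1} F_n$, which reduces the question, up to sign, to finding the largest Fibonacci number dividing $|S^U_n|$. From $S^U_n = -U_{n-1}+1 = (-1)^{n-1} F_{n-1} + 1$, I get $|S^U_n| = F_{n-1}-1$ for $n$ even (and $n \ge 4$) and $|S^U_n| = F_{n-1}+1$ for $n$ odd. The first task is to verify the claimed factorization $|S^U_n| = F_{m^U_n} L_{i_n}$ in each of the four residues $n \pmod 4$ by applying standard Fibonacci--Lucas product identities: $F_{4k-1}-1 = F_{2k}L_{2k-1}$ follows from Eq.~\ref{eq:2k+2|2k+1} with $k$ replaced by $k-1$; $F_{4k+1}-1 = F_{2k}L_{2k+1}$ from $F_{2j+1}-(-1)^j=F_j L_{j+1}$ with $j=2k$; $F_{4k}+1 = F_{2k-1}L_{2k+1}$ from $F_{2j}+(-1)^j=F_{j-1}L_{j+1}$ with $j=2k$; and $F_{4k+2}+1=F_{2k+2}L_{2k}$ from $F_{2j}-(-1)^j=F_{j+1}L_{j-1}$ with $j=2k+1$. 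The stated multipliers $z^U_n$ then fall out from the sign of $S^U_n$ together with the sign of $U_{m^U_n}=(-1)^{m^U_n-1}F_{m^U_n}$; in particular, the minus sign in $z^U_{4k+3}=-L_{2k}$ arises because $U_{2k+2}=-F_{2k+2}$ while $S^U_{4k+3}$ is positive.

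For maximality, the residues $n \equiv 0,\,2 \pmod 4$ are immediate: $|S^U_{4k}| = F_{4k-1}-1 = S^F_{4(k-1)+1}$ and $|S^U_{4k+2}| = F_{4k+1}-1 = S^F_{4(k-1)+3}$, so Theorem~\ref{thm:Fibmax} identifies the largest Fibonacci number dividing each, which is also the largest $|U_m|$ dividing $S^U_n$. The residues $n \equiv 1,\,3 \pmod 4$ lie outside the scope of Theorem~\ref{thm:Fibmax} because $F_r+1$ is not a Fibonacci partial sum. Here I would re-run the Binet-formula computation of Lemma~\ref{lemma:estimate} with the $-1$ in the numerator replaced by $+1$. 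That change only flips the sign of the $\sqrt{5}$ term, and since every step of the original bound is taken in absolute value, the identical constants carry through to yield: under the same hypotheses $r-3m\le -4$, $r\ge 11$, and $r-m\ge 0$, we have $\bigl|\tfrac{F_r+1}{F_m} - L_{r-m}\bigr|<1$, forcing $\tfrac{F_r+1}{F_m}=L_{r-m}$ whenever $F_m \mid F_r+1$. A putative larger divisor $F_{m'}$ with $m' > m^U_n$ would then produce two factorizations $F_m L_{r-m} = F_{m'} L_{r-m'}$, and Proposition~\ref{prop:FLIdentities} rules out $m \ne m'$.

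The main obstacle is the careful bookkeeping of signs across the four residue classes, together with verifying that the hypotheses of the adapted estimate are met by any hypothetical $m'>m^U_n$; this amounts to redoing the Table~\ref{tab:n-3m} calculation from the proof of Theorem~\ref{thm:Fibmax} for the shifted values $r=n-1$ rather than $r=n+2$, and is structurally identical. The small-$n$ regime, where $r<11$ or the estimate otherwise fails, would be handled by direct computation exactly as in the Fibonacci case.
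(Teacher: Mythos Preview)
Your proposal is correct and follows essentially the same route as the paper's proof: reduce via $U_n=(-1)^{n-1}F_n$ so that $|S^U_n|$ equals $F_{n-1}\mp1$; for even $n$ this is literally a Fibonacci partial sum, so Theorem~\ref{thm:Fibmax} applies directly; for odd $n$ the $F_r+1$ case is handled by observing that in the Binet computation of Lemma~\ref{lemma:estimate} only the $\sqrt{5}$ term flips sign, leaving the absolute-value bound intact, after which Proposition~\ref{prop:FLIdentities} finishes maximality. Your write-up is somewhat more explicit about the sign bookkeeping and the need to re-tabulate $n-3m$, but the ideas coincide with the paper's; just be careful when you carry out the adapted lemma that the hypotheses get shifted consistently (your stated conditions $r-3m\le-4$, $r\ge11$, $r-m\ge0$ are phrased in terms of $r$ playing the role of $n+2$, so they should read $r-3m\le-2$, $r\ge13$, $r-m\ge2$ if you want a literal transcription of Lemma~\ref{lemma:estimate}).
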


\begin{proof}
We have $S_{4k} = -(F_{4k-1} - 1) = -S^F_{4k-3} = -F_{2k}L_{2k-1} = U_{2k}L_{2k-1} $, where we used Eq.~\ref{eq:2k+2|2k+1}. Similarly, we have $S_{4k+2} = -(F_{4k+1} - 1)) = -S^F_{4k-1} = -F_{2k}L_{2k+1} = U_{2k}L_{2k+1} $, where we used Eq.~\ref{eq:2k+2|2k+3}. The maximality follows from the proof for the Fibonacci sequence.

Similarly, $S_{4k+1} = F_{4k} + 1 = F_{2k-1}L_{2k+1} = U_{2k-1}L_{2k+1}$ and $S_{4k+3} = F_{4k+2} + 1 = F_{2k+2}L_{2k} = -U_{2k+2}L_{2k}$. In both cases, we have $S_{2i+1} = F_{2i} + 1$. To prove maximality, notice that $S_{2i+1}$ differs from $S^F_{2i-2} = F_{2i} - 1$, by replacing 1 with $-1$. This change does not affect the argument in Lemma~\ref{lemma:estimate} because only the term $\frac{-\sqrt{5}}{\varphi^m-(\frac{-1}{\varphi})^m}$ changes sign, but we only consider the absolute value of each term, so the rest of the proof remains exactly the same.
\end{proof}

Now we check when $m$ increases by 1. We see that $m^U_{4k+2} = 2k = m^U_{4k+1} + 1$. Thus, the trick could be extended to the general sequence for $n = 4k+2$.

\subsubsection{$U(1,1)$}
\label{sec:U11}

Sequence $U(1,1)$ is sequence A128834, which is just a periodic sequence with a period of 6:
\[0,\ 1,\ 1,\ 0,\ -1,\ -1,\ 0,\ 1,\ 1,\ 0,\ -1,\ -1,\ \ldots.\]

The partial sums (starting from index 1) also form a periodic sequence with a period of 6
\[1,\ 2,\ 2,\ 1,\ 0,\ 0,\ 1,\ 2,\ 2,\ 1,\ 0,\ 0,\ \ldots,\]
which up to a shift is sequence A131026.

Partial sums are divisible by any non-zero element in the sequence. So, we can say that the largest index $m$ is infinity. In this case, we cannot use our method. We need to check the general case directly. Suppose a general sequence starts as $a$ and $b$ (from index 1), it continues as
\[a,\ b,\ b-a,\ -a,\ -b,\ -b+a,\ a,\ b,\ \ldots.\]
It always has period 6. The partial sums starting from index 1 are
\[a,\ a+b,\ 2b,\ -a+2b,\ -a+b,\ 0,\ a, \ldots.\]

Thus, there will always be a trick for $n \equiv 1,\ 3,\ 5,\ 6 \pmod{6}$ with the largest index $m$ at infinity.

\subsubsection{$U(3, 1)$}
\label{sec:U31}

Sequence $U(3, 1)$, which is sequence A001906 in the OEIS, is the bisection of the Fibonacci sequence. In other words A001906$(n) = F_{2n}$:
\[0,\ 1,\ 3,\ 8,\ 21,\ 55,\ 144,\ 377,\ \ldots.\]

The partial sums $S_n$ sequence is
\[1,\ 4,\ 12,\ 33,\ 88,\ 232,\, 599,\ \ldots,\]
which is A027941, defined as $S_n = F_{2n + 1} - 1$. It follows that $S_n = S^F_{2n -1}$, and we already studied the divisibility of this sequence by the Fibonacci numbers.

\begin{theorem}
For the sequence $U(3,1)$ and any $n>0$, the maximum $m$ such that $U_m$ divides the partial sum $S_n$ is $m=\lceil\frac{n}{2}\rceil$.
\end{theorem}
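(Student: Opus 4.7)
The plan is to reduce the statement to Theorem~\ref{thm:Fibmax} via the identifications $U_n(3,1) = F_{2n}$ and $S_n = F_{2n+1} - 1 = S^F_{2n-1}$ that are already recorded in the paragraph preceding the theorem. Under this translation, the question becomes: what is the largest integer $m$ for which $F_{2m}$ divides $S^F_{2n-1}$? In particular, we are asking for the largest \emph{even-indexed} Fibonacci number that divides this partial Fibonacci sum.

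Next, I would apply Theorem~\ref{thm:Fibmax} to $S^F_{2n-1}$, splitting on the parity of $n$. When $n = 2j+1$ is odd we have $2n-1 = 4j+1$, and the theorem gives $m_{4j+1} = 2j+2 = n+1$. When $n = 2j$ is even with $j \geq 2$ we have $2n-1 = 4(j-1)+3$, and the theorem gives $m_{4(j-1)+3} = 2(j-1)+2 = n$. In both situations the maximum Fibonacci index dividing $S_n$ is \emph{even} and equals $2\lceil n/2\rceil$. Since this is the overall maximum, no $F_{2m}$ with $m > \lceil n/2\rceil$ can divide $S_n$, while the maximum itself already has even index and is therefore realized by $F_{2\lceil n/2\rceil} = U_{\lceil n/2\rceil}$, yielding $m = \lceil n/2\rceil$.

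Finally, I would handle $n = 2$ separately, since then $2n-1 = 3$, the explicit exception of Theorem~\ref{thm:Fibmax}. A direct computation gives $S_2 = U_1 + U_2 = 1 + 3 = 4$; here $U_1 = 1$ divides $4$, while $U_2 = 3$ does not, so $m = 1 = \lceil 2/2\rceil$. The case $n = 1$ needs no separate treatment: the odd-$n$ branch above, with $j = 0$, correctly gives $m_1^F = 2$ and hence $m = 1 = \lceil 1/2\rceil$.

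The main obstacle is minimal --- the result is essentially a corollary of Theorem~\ref{thm:Fibmax}. The only genuine subtleties are confirming that the maximum Fibonacci index produced by the theorem happens to be even in both relevant residue classes (so that it actually corresponds to some $U$-index rather than being wasted on an odd Fibonacci index), and dealing with the single small case $n = 2$, which falls under the $n = 3$ exception of that theorem, by direct inspection.
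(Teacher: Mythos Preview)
Your proposal is correct and follows essentially the same route as the paper: identify $S_n$ with $S^F_{2n-1}$ and invoke Theorem~\ref{thm:Fibmax}, splitting on the parity of $n$. If anything, your write-up is slightly more careful than the paper's own argument: you make explicit the key point that the maximal Fibonacci index supplied by Theorem~\ref{thm:Fibmax} is \emph{even} in both residue classes (so that the overall maximum is actually attained by some $U_m$), and you separately treat $n=2$, which lands on the exceptional case $S^F_3$ of that theorem.
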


\begin{proof}
Consider $n = 2k$. We have $S_{2k} = S^{F}_{4k-1}$. By Theorem~\ref{thm:Fibmax}, the maximal Fibonacci number that divides it is $F_{2k} = U_{k}$.

Consider $n = 2k - 1$. We have $S_{2k-1} = S^{F}_{4k-3}$. By Theorem~\ref{thm:Fibmax}, the maximal value of $U$ for Fibonacci numbers is $F_{2k} = U_{k}$.
\end{proof}

Thus, when $n$ is odd $m_{n-1} + 1 = \frac{n-1}{2} + 1 = \frac{n+1}{2} = m_n$. Thus, for odd $n$, the trick works for any general sequence with the same recursion.

\subsection{Jacobsthal}
\label{sec:jacob}

Consider the Jacobsthal numbers, sequence A001045. The sequence is defined as $J_n = J_{n-1} + 2J_{n-2}$, with $J_0 = 0$, $J_1 = 1$:
\[0,\ 1,\ 1,\ 3,\ 5,\ 11,\ 21,\ 43,\ 85,\ 171,\ 341,\ 683,\ 1365,\ 2731,\ \ldots .\]

Partial sums of Jacobsthal numbers form sequence A000975. The sequence is defined recursively as $a(2n) = 2a(2n-1)$ and $a(2n+1) = 2a(2n)+1$, with initial terms $a(0) = 1$ and $a(1) = 1$. In an alternative definition, when a number in this sequence is written in binary, the binary digits alternate:
\[1,\ 2,\ 5,\ 10,\ 21,\ 42,\ 85,\ 170,\ 341,\ 682,\ 1365,\ 2730,\ 5461,\ \ldots .\]

The formula for the sum of the first $n$ Jacobshtal numbers $S^J_n$ is well-known, and also follows from Eq.~\ref{eq:USum}:
\[S^J_{2k-1} = \sum_{i=1}^{2k-1} J_i = J_{2k} \quad \textrm{ and } \quad S^J_{2k} = \sum_{i=1}^{2k} J_i = J_{2n+1} - 1 = 2J_{2k}.\]

From here we have the following theorem related to the sum of the first $n$ Jacobsthal numbers, where $m^J_n$ is the largest index such that the corresponding Jacobsthal number divides the given sum, and $z^J_n$ is the multiplier.

\begin{theorem}
For the Jacobshtal sequence we have the maximum divisibility index $m^J_{2k-1} = 2k$ and $m^J_{2k} = 2n$ while the multipliers are $z^J_{2k-1} =1$ and $z^J_{2k} = 2$.
\end{theorem}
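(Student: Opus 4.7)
The plan is to reduce everything to the explicit sum formulas already stated just above the theorem: $S^J_{2k-1} = J_{2k}$ and $S^J_{2k} = 2J_{2k}$. These immediately give the divisibilities $J_{2k} \mid S^J_{2k-1}$ and $J_{2k} \mid S^J_{2k}$ with quotients $1$ and $2$ respectively, so the entire content of the theorem is the upper-bound claim that no $J_m$ with $m > 2k$ divides $2J_{2k}$. This single upper bound also subsumes the $n = 2k-1$ case, since any $J_m$ dividing $J_{2k}$ automatically divides $2J_{2k}$.

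To establish the upper bound, I would first invoke the Binet-type closed form $J_n = (2^n - (-1)^n)/3$. A one-line computation then yields the crucial identity $J_{2k+1} = 2J_{2k} + 1$. For $k \geq 1$ this places $J_{2k+1}$ exactly one unit above $2J_{2k}$, so $J_{2k+1} \nmid 2J_{2k}$. Next, from the closed form (or a short induction on the recurrence $J_n = J_{n-1} + 2J_{n-2}$) the Jacobsthal sequence is strictly increasing for $n \geq 2$, so $J_m > J_{2k+1} > 2J_{2k}$ for every $m > 2k+1$, which rules out divisibility by all higher-indexed $J_m$ at once.

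The small case $k = 1$ (i.e., $n = 1, 2$) should be checked by hand to handle the edge behavior, but the calculation is trivial. I do not anticipate a genuine obstacle: everything rides on the tight identity $J_{2k+1} = 2J_{2k} + 1$, and the hardest step is merely spotting that this is the right identity to pin down the gap between the sum and the next candidate divisor. Incidentally, this same tightness is precisely the reason the trick fails to extend to general Jacobsthal-like sequences: consecutive maximal indices $m^J_n$ never differ by exactly one, so the hypothesis of Theorem~\ref{thm:generalrule} is never met.
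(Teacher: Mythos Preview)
Your proposal is correct and essentially mirrors the paper's proof: both use the displayed formulas $S^J_{2k-1}=J_{2k}$ and $S^J_{2k}=2J_{2k}=J_{2k+1}-1$, then invoke the strict monotonicity of $(J_n)_{n\ge 2}$ to rule out any $J_m$ with $m>2k$. Your organizational choice to treat the odd-$n$ case as a corollary of the even-$n$ bound is a minor streamlining, but the key identity and the size comparison are identical to the paper's argument.
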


\begin{proof}
We know that $S^J_{2k-1}= J_{2k}$. Since the Jacobsthal numbers form an increasing sequence, we know that $2k$ is the largest index less than or equal to $J_{2k}$, so we must have $m = 2k$ and $z = 1$.

Now, for the second part of the proof, we know that $S^J_{2k} = 2J_{2k} = J_{2k + 1} - 1$. Since $2J_{2k} < J_{2k + 1}$, so $J_{2k + 1}$ cannot divide the sum. Thus, we have $m = 2k$ and $z = 2$.
\end{proof}

We see that these sequences are one of the easiest to study in the framework of our trick. 

\begin{theorem}
A common trick for all Jacobsthal-like sequences does not exist.
\end{theorem}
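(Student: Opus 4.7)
The plan is to apply the linearity principle underlying Theorem~\ref{thm:generalrule} to the Jacobsthal recurrence. A trick with parameters $(n, m, z)$ extends to every Jacobsthal-like sequence if and only if it holds simultaneously for the Jacobsthal sequence $J$ and for its shift $J'_n := J_{n-1}$, which reduces the claim to showing that no integers $n \geq 2$, $m \geq 1$, and $z$ satisfy
\[ S^J_n = z J_m \qquad \text{and} \qquad S^J_{n-1} = z J_{m-1}. \]
We may restrict to $n \geq 2$, since $n = 1$ only produces the tautological trick $S_1 = x_1$.

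Using the formulas $S^J_{2k-1} = J_{2k}$ and $S^J_{2k} = 2 J_{2k}$ established in the preceding theorem, I would split on the parity of $n$. If $n = 2k$ is even, the two equations combine to force $J_m / J_{m-1} = 2$, i.e., $J_m = 2 J_{m-1}$; a one-line induction from the recurrence (or Binet's formula $J_m = (2^m - (-1)^m)/3$) yields $J_m - 2 J_{m-1} = (-1)^{m+1}$, which is never zero, ruling out every even $n$.

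If $n = 2k+1$ with $k \geq 1$, eliminating $z$ produces the Diophantine identity $J_{2k+2}\, J_{m-1} = 2\, J_{2k}\, J_m$. Substituting Binet's formula and simplifying reduces this, after clearing denominators, to
\[ 2^{m-1} + (-1)^m (2^{2k+1} - 1) = 0. \]
A short parity analysis then finishes the case: for even $m$ the left-hand side is strictly positive; for odd $m \geq 2$ it becomes $2^{m-1} = 2^{2k+1} - 1$, impossible since the left side is even while the right is odd; and $m = 1$ forces $k = 0$, which we have excluded.

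The main technical hurdle is the Binet-based algebraic reduction in the odd case; once that identity is in hand, the non-existence follows by elementary $2$-adic considerations, and combining the two parity cases yields the theorem.
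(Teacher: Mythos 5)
Your proof is correct, and it shares the paper's overall skeleton: reduce a common trick to the simultaneous equations $S^J_n = zJ_m$ and $S^J_{n-1} = zJ_{m-1}$ via the shifted sequence (the linearity idea behind Theorem~\ref{thm:generalrule}), then split on the parity of $n$ using $S^J_{2k-1} = J_{2k}$ and $S^J_{2k} = 2J_{2k}$. The even case is the same in substance — you both arrive at $J_m = 2J_{m-1}$; the paper kills it by noting Jacobsthal numbers are odd, while your identity $J_m - 2J_{m-1} = (-1)^{m+1}$ is the same fact in sharper form. Where you genuinely diverge is the odd case: the paper argues that $J_{2k}$ and $J_{2k+2}$ are coprime, forces $z = 1$, and then dismisses the resulting ``two consecutive sums equal two consecutive terms'' by a growth consideration left to the reader; you instead eliminate $z$ to get $J_{2k+2}J_{m-1} = 2J_{2k}J_m$ and use Binet's formula $J_n = (2^n - (-1)^n)/3$ to reduce this to $2^{m-1} + (-1)^m(2^{2k+1}-1) = 0$ (I checked the algebra; it is right), which dies by parity, with $m=1$ forcing the excluded $k=0$. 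Your route is more computational but completely explicit and self-contained, closing the step the paper waves through; the paper's route is shorter and avoids Binet entirely, at the cost of an unproved (though easy) monotonicity claim. Your explicit exclusion of the tautological $n=1$ trick is also a small point of extra care that the paper leaves implicit.
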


\begin{proof}
Suppose a common trick exists for some $n$: $S^J_n = zJ_m$. Then it also exists for the shifted Jacobsthal sequence, and we have $S^J_{n-1} = zJ_{m-1}$. 

Suppose $n = 2k +1$. Then $S^J_{n} = S^J_{2k+1} = J_{2k+2}$ and $S^J_{n-1} = S^J_{2k} = 2J_{2k}$. Numbers $J_{2k}$ and $J_{2k+2}$ are coprime: if they share a divisor, then $J_{2k+1}$ and the rest of the sequence would share the same divisor. Therefore, $z = 1$. However, this implies that two consecutive sums equal to two consecutive terms correspondingly, which is not true. Thus, the trick cannot work for odd $n$.

Suppose $n=2k$ is even. Then we have $S^J_{n-1} = J_{2k} = zJ_{m-1}$ and $S^J_{n} = 2J_{2k} = zJ_{m}$. It follows that $J_m = 2J_{m-1}$, which is impossible as Jacobsthal numbers are odd.
\end{proof}

\subsection{Pell numbers}
\label{sec:pell}

We denote Pell numbers as $P_n$. Pell numbers are defined by recurrence relation $P_n = 2P_{n-1} + P_{n-2}$ for $n > 1$ and have initial conditions $P_0 = 0$, $P_1 = 1$. This is the Lucas sequence of the first kind $U_n(2,-1)$. In the database, this is sequence A000129. It starts as
\[0,\ 1,\ 2,\ 5,\ 12,\ 29,\ 70,\ 169,\ 408,\ 985,\ 2378,\ 5741,\ 13860,\ \ldots .\]

Partial sums of Pell numbers, denoted as $S^P_n$, are sequence A048739, which starts as
\[1,\ 3,\ 8,\ 20,\ 49,\ 119,\ 288,\ 696,\ 1681,\ 4059,\ 9800,\ 23660,\ \ldots .\]
We know that
\[S^P_n = \frac{P_{n}+P_{n+1}-1}{2}.\]

What is the largest $m^P_n$ such that $S^P_n$ is divisible by $P_m$? We summarize the results of our program in Table~\ref{tab:Pell}.

\begin{table}[ht!]
\begin{center}
\begin{tabular}{|c|c|c|}
\hline
Number of elements $n$ in the sum & index $m$ & multiplier $z$ \\ \hline
1	& 1	& 1 \\ \hline
2	& 1	& 3 \\ \hline
3	& 2	& 4 \\ \hline
4	& 3	& 4 \\ \hline
5	& 1	& 49 \\ \hline
6	& 1	& 119 \\ \hline
7	& 4	& 24 \\ \hline
8	& 5	& 24 \\ \hline
9	& 1	& 1681 \\ \hline
10	& 1	& 4059 \\ \hline
11	& 6	& 140 \\ \hline
12	& 7	& 140 \\ \hline
13	& 1	& 57121 \\ \hline
14	& 1	& 137903 \\ \hline
15	& 8	& 816 \\ \hline
16	& 9	& 816 \\ \hline
17	& 1	& 1940449 \\ \hline
18	& 1	& 4684659 \\ \hline
19	& 10	& 4756 \\ \hline
20	& 11	& 4756 \\ \hline
\end{tabular}
\end{center}
\caption{Pell numbers.}
\label{tab:Pell}
\end{table}

The table exhibits a pattern and implies the following maximality conjecture.

\begin{conjecture}
For the Pell sequence, we have $m^P_{4k+1} = m^P_{4k+2} = 1$, $m^P_{4k-1} = 2k$, and $m^P_{4k} = 2k+1$.
\end{conjecture}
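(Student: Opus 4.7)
The plan is to first reduce the problem to explicit closed forms for $S_n^P$ in each residue class of $n$ modulo $4$, then handle divisibility using primitive prime divisors of Pell numbers. Introduce the half-companion Pell numbers $H_n$ defined by $H_0 = H_1 = 1$ and $H_n = 2H_{n-1} + H_{n-2}$ (equivalently $H_n = (\alpha^n + \beta^n)/2$ with $\alpha,\beta = 1\pm\sqrt{2}$). The standard identities $P_n + P_{n+1} = H_{n+1}$, $H_{n+1} = H_n + 2P_n$, $P_{n+1} = P_n + H_n$, $P_{2n} = 2P_n H_n$, and $H_{2n} = 2H_n^2 - (-1)^n$, together with the Cassini-type relation $H_n^2 - 2P_n^2 = (-1)^n$, follow from Binet. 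Combining $S_n^P = (H_{n+1}-1)/2$ with the doubling formula, a short calculation yields
\[
S^P_{4k-1} = 2P_{2k}^2,\quad S^P_{4k} = 2P_{2k}P_{2k+1},\quad S^P_{4k+1} = H_{2k+1}^2,\quad S^P_{4k+2} = H_{2k+1}H_{2k+2},
\]
which immediately gives the lower bounds $m^P_{4k-1} \ge 2k$ and $m^P_{4k} \ge 2k+1$.

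For $n \equiv 1, 2 \pmod 4$, the next step is to rule out every $P_m$ with $m \ge 2$. A straightforward parity induction shows each $H_j$ is odd, so $S_n^P$ is odd and no even $P_m$ can divide it. For odd $m \ge 3$, Carmichael's theorem supplies an odd prime $p$ whose rank of apparition in the Pell sequence is exactly $m$ (the prime $2$ is primitive only for $P_2$). The key lemma to prove is that no odd prime of odd Pell-rank divides any $H_n$: the factorization $(\alpha^n + \beta^n)(\alpha^n - \beta^n) = \alpha^{2n} - \beta^{2n}$ gives $p \mid H_n \Rightarrow p \mid P_{2n}$, so the rank divides $2n$ and, being odd, must also divide $n$, forcing $p \mid P_n$; combined with $p \mid H_n$ this contradicts $\gcd(P_n, H_n) = 1$ via Cassini. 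Hence $m^P_{4k+1} = m^P_{4k+2} = 1$.

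For $n \equiv 0, 3 \pmod 4$, the argument is by contradiction. Suppose $P_m \mid S_n^P$ with $m > 2k$ (respectively $m > 2k+1$ in the $n = 4k$ case). Carmichael again supplies an odd primitive prime divisor $p$ of $P_m$, and since $p$ is odd and $S_n^P \in \{2P_{2k}^2,\ 2P_{2k}P_{2k+1}\}$, $p$ must divide $P_{2k}$ or $P_{2k+1}$. By the strong divisibility property $\gcd(P_a, P_b) = P_{\gcd(a,b)}$, the rank $m$ of $p$ then divides $2k$ or $2k+1$, contradicting the hypothesis on $m$. The indices for which Carmichael could in principle fail ($m \le 2$) lie outside the relevant window $m > 2k \ge 2$ whenever $k \ge 1$, so no separate verification is required there. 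The heaviest ingredient, and hence the main obstacle, is securing the existence of primitive prime divisors of $P_m$ for every $m \ge 2$; this follows from Carmichael's 1913 theorem for $m > 12$ together with a direct check of $P_2,\dots,P_{12}$.
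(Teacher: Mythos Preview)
The paper labels this statement as a \emph{conjecture} and does not attempt to prove it.  All the paper establishes is the pair of identities $S^P_{4k-1}=2P_{2k}^2$ and $S^P_{4k}=2P_{2k}P_{2k+1}$, which give the divisibility (lower bounds) in the residue classes $n\equiv 3,0\pmod 4$; no argument is offered for the classes $n\equiv 1,2\pmod 4$, and no maximality is claimed anywhere.  So your proposal is not a reworking of the paper's proof---it is a proof of something the authors left open.

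Your argument is correct.  The four closed forms
\[
S^P_{4k-1}=2P_{2k}^2,\qquad S^P_{4k}=2P_{2k}P_{2k+1},\qquad S^P_{4k+1}=H_{2k+1}^2,\qquad S^P_{4k+2}=H_{2k+1}H_{2k+2}
\]
follow from $S^P_n=(H_{n+1}-1)/2$ together with the doubling identity $H_{2n}=2H_n^2-(-1)^n$, the step identities $H_{n+1}=H_n+2P_n$ and $P_{n+1}=P_n+H_n$, and Cassini $H_n^2-2P_n^2=(-1)^n$, exactly as you indicate.  The maximality argument via primitive prime divisors is sound: for odd $m\ge 3$ the primitive prime $p$ of $P_m$ is odd (since $2$ has Pell-rank $2$), and your lemma that no odd prime of odd Pell-rank divides any $H_n$ is correctly derived from $P_{2n}=2P_nH_n$ and $\gcd(P_n,H_n)=1$; combined with the parity observation that each $H_j$ is odd, this dispatches the $n\equiv 1,2$ cases.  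For $n\equiv 3,0$ the contradiction via rank-of-apparition (equivalently the strong divisibility $\gcd(P_a,P_b)=P_{\gcd(a,b)}$) is also correct, and your bookkeeping that the relevant window $m>2k\ge 2$ (resp.\ $m>2k+1\ge 3$) for $k\ge 1$ forces $m\ge 3$, where an odd primitive prime always exists, is accurate after the direct check of $P_2,\dots,P_{12}$.

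It is worth noting the methodological contrast with the Fibonacci case in the paper: there the authors prove maximality by Binet-type estimates (their Lemma~\ref{lemma:estimate}) together with a uniqueness result for products $F_aL_b$ (their Proposition~\ref{prop:FLIdentities}).  Your primitive-prime approach bypasses both ingredients and is shorter and more robust; it would in fact also give an alternative proof of Theorem~\ref{thm:Fibmax}.
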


To confirm the divisibility for $n$ with remainders 0 and 1 modulo 4, we use the following well-known equalities \cite{SD,Bradie}:
\[ S_{4k-1}^P=2P_{2k}^2 \quad \mathrm{ and }\quad S_{4k}^P=2P_{2k+1}P_{2k},\]
which implies that the Pell numbers have a nontrivial trick for such $n$. Now, we are ready for our theorem.

\begin{theorem}
For a Pell-like sequence $S_n$, the sum of the first $4k$ terms (starting from index 1) is the $2k+1$st term times $2P_{2k}$.
\end{theorem}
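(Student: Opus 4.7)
The plan is to deduce the theorem as an immediate application of Theorem~\ref{thm:generalrule} to the Pell sequence $U_n = P_n$, taking $n = 4k$, $m = 2k+1$, and common multiplier $z = 2P_{2k}$. Under this choice, the two hypotheses of Theorem~\ref{thm:generalrule}, namely $S_n^U = U_m z$ and $S_{n-1}^U = U_{m-1} z$, read
\[
S_{4k}^P = 2P_{2k} \cdot P_{2k+1} \quad \text{and} \quad S_{4k-1}^P = 2P_{2k} \cdot P_{2k}.
\]
Once these are verified, Theorem~\ref{thm:generalrule} concludes by linearity that every Pell-like sequence $S_n$ satisfies $\sum_{i=1}^{4k} S_i = 2P_{2k} \cdot S_{2k+1}$, which is exactly the desired statement.

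To supply the two identities, I would simply quote the two well-known formulas stated immediately before the theorem: $S_{4k-1}^P = 2P_{2k}^2$ and $S_{4k}^P = 2P_{2k+1}P_{2k}$, citing \cite{SD,Bradie}. Rewriting $2P_{2k}^2 = P_{2k} \cdot (2P_{2k})$ and $2P_{2k+1}P_{2k} = P_{2k+1} \cdot (2P_{2k})$ makes the common multiplier $z = 2P_{2k}$ manifest and shows that the hypotheses of Theorem~\ref{thm:generalrule} are met with $(U_m, U_{m-1}) = (P_{2k+1}, P_{2k})$.

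If one wanted a self-contained derivation of the two partial-sum identities rather than a citation, one could substitute $(P, Q) = (2, -1)$ into Eq.~\ref{eq:USum} to obtain $S_n^P = \frac{-P_{n} - P_{n+1} + 1}{-2} = \frac{P_n + P_{n+1} - 1}{2}$, and then verify the two cases by plugging in the classical product identities $P_{2k+1} + P_{2k} - 1 = 4P_k^2 + \text{(lower-order corrections)}$ or, more cleanly, by a short induction on $k$ using the Pell recurrence.

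The proof has essentially no obstacles: the main work was done in Theorem~\ref{thm:generalrule}, which reduces the entire question for Pell-like sequences to checking two numerical identities for the base Pell sequence, both of which are classical. The only mild subtlety is noticing that the common multiplier $z$ is exactly $2P_{2k}$ in both cases, which is what makes the pair of hypotheses simultaneously satisfiable.
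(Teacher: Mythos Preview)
Your proposal is correct and follows essentially the same approach as the paper: both invoke Theorem~\ref{thm:generalrule} with $n=4k$, $m=2k+1$, and $z=2P_{2k}$, and both verify the two hypotheses by citing the identities $S_{4k-1}^P=2P_{2k}^2$ and $S_{4k}^P=2P_{2k+1}P_{2k}$ from \cite{SD,Bradie}. Your write-up is slightly more explicit about how the common multiplier $z$ emerges, but the argument is the same.
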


\begin{proof}
From our divisibility pattern, we know that for $n = 4k$, the sum of $n$ Pell numbers is divisible by $P_{2k+1}$ with the ratio $2P_{2k}$; we also know that the sum of the first $n-1$ terms is divisible by $P_{2k}$ with the same ratio. As the index decreases by 1, by our Theorem~\ref{thm:generalrule}, the trick works for any Pell-like sequence.
\end{proof}

\section{Tribonacci}
\label{sec:trib}

Tribonacci numbers $T_n$ are defined as $T_0 = T_1 = 0$, $T_2 = 1$, and $T_n = T_{n-1} + T_{n-2} + T_{n-3}$ for $n > 2$. This is sequence A000073, and it starts as
\[0,\ 0,\ 1,\ 1,\ 2,\ 4,\ 7,\ 13,\ 24,\ 44,\ 81,\ 149,\ 274,\ 504,\ 927,\ \ldots.\]

Similarly, we can define a Tribonacci-like sequence by positing that the initial terms are $a$, $b$, and $c$, and each consecutive term is the sum of the previous three terms. Table~\ref{tab:tri} shows the first ten terms and the first ten partial sums.

\begin{table}[ht!]
\begin{center}
\begin{tabular}{|c|c|c|}
\hline 
Index $n$ & $n$th term & Partial Sum of the first $n$ terms \\ \hline
1	& $a$		& $a$	\\ \hline
2	& $b$		& $a+b$	\\ \hline
3	& $c$		& $a+b+c$	\\ \hline
4	& $a+b+c$	& $2a+2b+2c$	\\ \hline
5	& $a+2b+2c$	& $3a+4b+4c$	\\ \hline
6	& $2a+3b+4c$	& $5a+7b+8c$	\\ \hline
7	& $4a+6b+7c$	& $9a+13b+15c$	\\ \hline
8	& $7a+11b+13c$	& $16a+24b+28c$	\\ \hline
9	& $13a+20b+24c$	& $29a+44b+52c$	\\ \hline
10	& $24a+37b+44c$	& $53a+81b+96c$	\\ \hline
\end{tabular}
\end{center}
\caption{Tribonacci-like sequence and its partial sums.}
\label{tab:tri}
\end{table}
We actually calculated more values than this. But we only found three cases when the sum divides one of the terms:

\begin{enumerate}
\item The sum of the first three terms is the fourth term.
\item The sum of the first four terms is twice the fourth term.
\item The sum of the first eight terms is 4 times the seventh term.
\end{enumerate}

Interestingly, we can generalize this for any $n$-nacci numbers. An $n$-nacci like sequence starts with any $n$ integers $a_1$, $a_2$, \ldots, $a_n$. After that, each consecutive term is the sum of the previous $n$ terms.

\begin{theorem}
For any $n$-nacci-like sequence
\begin{enumerate}
\item The sum of the first $n$ terms is the $(n+1)$st term.
\item The sum of the first $n+1$ terms is twice the $(n+1)$st term.
\item the sum of the first $2n+2$ terms is equal to 4 times the $(2n+1)$st term.
\end{enumerate}
\end{theorem}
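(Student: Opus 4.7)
The plan is to handle the three parts in order, using only the defining recurrence $a_k = a_{k-1} + a_{k-2} + \cdots + a_{k-n}$ for $k > n$ together with elementary algebra on the partial sums $S_k = \sum_{i=1}^k a_i$. Part 1 is literally the recurrence at index $n+1$: the definition gives $a_{n+1} = a_n + a_{n-1} + \cdots + a_1 = S_n$, so $S_n = a_{n+1}$. Part 2 then follows by adding one more term, $S_{n+1} = S_n + a_{n+1} = 2a_{n+1}$, which in particular yields $S_{n+1} = 2 S_n$.

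For Part 3, the key observation is that for any $k \geq n+1$, summing the $n$ consecutive terms $a_{k-n}, \ldots, a_{k-1}$ gives $a_k = S_{k-1} - S_{k-n-1}$, and consequently $S_k = 2S_{k-1} - S_{k-n-1}$. I would apply this identity twice. At $k = 2n+1$ it yields $S_{2n+1} = 2S_{2n} - S_n$ and also $a_{2n+1} = S_{2n} - S_n$. At $k = 2n+2$ it yields $S_{2n+2} = 2S_{2n+1} - S_{n+1}$. Combining these and invoking $S_{n+1} = 2S_n$ from Part 2,
\[ S_{2n+2} = 2(2S_{2n} - S_n) - 2S_n = 4(S_{2n} - S_n) = 4\, a_{2n+1}, \]
which is precisely the claim of Part 3.

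There is no real obstacle here: all three identities are short consequences of the recurrence, and the only care required is to verify that each use of $S_k = 2S_{k-1} - S_{k-n-1}$ is at an index $k \geq n+1$ (both $k = 2n+1$ and $k = 2n+2$ satisfy this for every $n \geq 1$). If anything, the one step worth highlighting is spotting the partial-sum recurrence $S_k = 2S_{k-1} - S_{k-n-1}$, but this is just the telescoping statement that adding $a_k$ to $S_{k-1}$ shifts the window of the last $n$ terms by one, replacing $S_{k-n-1}$ by $S_{k-1}$.
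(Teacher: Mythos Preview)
Your proof is correct. Parts 1 and 2 match the paper's argument exactly, and your Part 3 is a slight repackaging of the same computation: the paper decomposes $\sum_{i=1}^{2n+2} a_i$ directly into blocks of $n$ consecutive terms and substitutes, whereas you phrase the same substitutions through the partial-sum recurrence $S_k = 2S_{k-1} - S_{k-n-1}$ applied at $k = 2n+1$ and $k = 2n+2$; the underlying algebra is identical.
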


\begin{proof}
We can calculate that the sum of the first $n$ terms is $a_1 + a_2 + \cdots + a_n$, which is the $(n + 1)$st term in the $n$-nacci-like sequence. The sum of the first $n + 1$ terms is $2a_1 + 2a_2 + \cdots + 2a_n$, which is $2$ times the $(n + 1)$st term in the $n$-nacci-like sequence. 

Consider the sum of the first $2n+2$ terms.
We have
\[\sum_{i=1}^{2n+2} a_{i} = \sum_{i=1}^{n} a_{i} + \sum_{i=n+1}^{2n} a_{i} + a_{2n+1} + a_{2n+2}.\]
We can replace $\sum_{i=1}^{n} a_{i}$ with $a_{n+1}$ and $\sum_{i=n+1}^{2n} a_{i}$ with $a_{2n+1}$, and also expand $a_{2n+2}$ to get
\[\sum_{i=1}^{2n+2} a_{i} = a_{n+1} + a_{2n+1} + a_{2n+1} + \sum_{i=n+2}^{2n+1} a_{i} = a_{n+1} + 3a_{2n+1} + \sum_{i=n+2}^{2n} a_{i}.\]
After redistributing, we have
\[\sum_{i=1}^{2n+2} a_{i} = 3a_{2n+1} + \sum_{i=n+1}^{2n} a_{i} = 4a_{2n+1}.\]
\end{proof}

\section{Acknowledgments}

We are grateful to the MIT PRIMES STEP program for giving us the opportunity to conduct this research.

\end{document}